\documentclass[12pt]{article}
\usepackage{amsmath,amssymb, amsfonts,amsthm,fancyhdr,mathrsfs,geometry,graphicx,setspace,float, eufrak}
\usepackage{enumerate}


\title{\textbf{Ideals of mid $\boldsymbol{p}$-summing operators: a tensor product approach}}

\author{DEEPIKA BAWEJA, ALEENA PHILIP }
\date{\small\textit{Department of Mathematics, BITS Pilani Hyderabad, 500078\\%
deepika@hyderabad.bits-pilani.ac.in, p20190038@hyderabad.bits-pilani.ac.in}}
\newtheorem{thm}{Theorem}[section]
\newtheorem{pro}[thm]{Proposition}
\newtheorem{cor}[thm]{Corollary}

\newtheorem{Definition}[thm]{Definition}
\newtheorem{Lemma}[thm]{Lemma}

\numberwithin{equation}{section}

\begin{document}
	\maketitle	
	\begin{abstract}
	In this article, we study the ideals of mid $p$-summing operators. We obtain  representation of these operator ideals by tensor norms. These tensor norms are defined by using  a particular kind of sequential dual of the class of mid $p$-summable sequences. As a consequence, we prove a characterization of the adjoints of weakly and absolutely mid $p$-summing operators in terms of the operators that are defined by the transformation of dual spaces of certain vector-valued sequence spaces. 	\end{abstract}

	\textit{\textbf{Keywords}}: Banach sequence spaces, tensor norms, operator ideals, summing operators.
	\\ MSC Code 2020: 46B28, 46B45, 47B10.
	
		\section{Introduction and Terminologies}

		The theory of mid $p$-summing operators has its origin in the work of Karn and Sinha\cite{Karn} with the introduction  of mid $p$-summable sequences. The concept of mid $p$-summability   appear naturally while extending the notion of limited sets to the $p$-sense and, lies intermediate to the notion of weak $p$-summability and absolute $p$-summability.
	This new kind of summability for vector-valued sequences has provoked the interest of several authors (see
		 \cite{Botelho,Fourie,zee}) which led them to investigate the ideals of  operators that transform sequences from/into the space of mid $p$-summable sequences. Most of these studies involve only
		  the theory of operator ideals. 
		  
		  On the other hand, the theory of tensor products of Banach spaces has been initiated in Grothendieck's famous R\'esum\'e\cite{Gro}. The theory of operator ideals is closely connected with the theory of tensor norms (see \cite{Defant}); indeed, representation theorem for maximal ideals\cite[Theorem 17.5]{Defant} provides a natural correspondence between maximal operator ideals and finitely generated tensor norms: there exists a finitely generated tensor norm $\alpha$ associated with a maximal operator ideal $[\mathcal{U},\|\cdot\|_{\mathcal{U}}]$ such that $\mathcal{U}(X,Y^{*})=(X\otimes_{\alpha}Y)^{*}$ holds isometrically for all Banach spaces $X,Y$. Recently, Zeekoi\cite{zee} has proved that the class of weakly mid $p$-summing operators is a maximal Banach operator ideal which ensures that one can construct a finitely generated tensor norm associated to the ideal of weakly mid $p$-summing operators. The aim of this paper is to define a tensor norm corresponding to the ideal of weakly mid $p$-summing operators and further explore its properties using the theory of tensor products.

	Our notation of operator ideals and tensor norms is standard and we refer to the classical monographs \cite{Defant, DiJaTo, ryan} for a detailed background. We shall use the letters $X$, $Y$ to denote Banach spaces over $\mathbb{K}$ and $B_{X}$ to denote the closed unit ball of $X$. We denote by $\mathcal{L}(X,Y)$  the space of all bounded linear operators from $X$ to $Y$, and  the topological dual  and bidual of $X$ by $X^{*}$ and  $X^{**}$  respectively.  The symbol $X\otimes Y$  denotes the tensor product of Banach spaces $X$ and $Y$.  A typical element in $X\otimes Y$ is of the form $u=\sum\limits_{j=1}^{n}x_{j}\otimes y_{j}$, where $x_{j} 's \in X$ and $y_{j} 's \in Y$. A norm $\alpha$ on $X\otimes Y$ is said to be a \textit{reasonable cross norm} if the following inequality holds for every $u\in X\otimes Y$:
		\begin{equation}\label{eq:1}
			\varepsilon\left(u\right) \leq \alpha\left(u\right) \leq \pi\left(u\right)
		\end{equation}
		where $\varepsilon$ and $\pi$ denote injective and projective norms respectively. A reasonable crossnorm $\alpha$ is  said to be \textit{uniform} if for any Banach spaces $X_{1},X_{2},Y_{1},Y_{2}$ and bounded linear maps $S_{1}:X_{1}\rightarrow Y_{1}$ and $S_{2}:X_{2}\rightarrow Y_{2}$, $$\|S\otimes T:X_{1}\otimes_{\alpha} X_{2}\rightarrow Y_{1}\otimes_{\alpha} Y_{2}\|\leq \|S\|\|T\|,$$ where $S_{1}\otimes_{\alpha}S_{2}$ is defined as $S_{1}\otimes_{\alpha}S_{2}(x_{1}\otimes_{\alpha}x_{2})=S_{1}(x_{1})\otimes_{\alpha}S_{2}(x_{2}), x_{1}\in X_{1}, x_{2}\in X_{2}$. A \textit{tensor norm} $\alpha$ is a uniform reasonable crossnorm. Moreover, a uniform crossnorm is said to be \textit{finitely generated} if for every pair of Banach spaces $X,Y$ and for each $u\in X \otimes Y$,
		\begin{equation}\label{eq:2}
			\alpha_{X,Y}\left(u\right) = \inf \{\alpha_{M,N}\left(u\right) : u\in M\otimes N, \text{dim}\left(M\right), \text{dim}\left(N\right) <\infty\}.
		\end{equation}
A sequence class $S$  is a rule that assigns to each Banach space $X,$  a Banach space $S(X)$ of $X$-valued sequences, such that:
\begin{enumerate}[(i)]
	\item $c_{00}(X)\subseteq S(X) \hookrightarrow \ell_{\infty}(X)$ for each Banach space $X$.
	\item $\|x.e_{j}\|_{S(X)}
	=\|x\|_{X}	$ for each $x\in X$.	
\end{enumerate}
		
In the sequel, we deal with the the following vector-valued sequence classes:
		\\Let  $p^{*}$ be the conjugate of $p$, i.e.$\frac{1}{p}+\frac{1}{p^{*}}=1$. Then
		\begin{itemize}
			\item For $1\leq p < \infty$, $\ell_{p}\left(X\right):=\left\{(x_{j})_{j=1}^{\infty}\in X^{\mathbb{N}}:\sum\limits_{j=1}^{\infty}\|x_{j}\|^{p} < \infty \right\}$ is the Banach space of \textit{absolutely $p$-summable sequences} endowed with the norm,
			\begin{equation*}
				\|(x_{j})_{j=1}^{\infty}\|_{p}=\left(\sum_{j=1}^{\infty}\|x_{j}\|^{p}\right)^{1/p}.
			\end{equation*}
		For $p=\infty,$ $\ell_{\infty}(X)=\left\{(x_{j})_{j=1}^{\infty}\in X^{\mathbb{N}}: \|x_{j}\|<\infty \text{ for every } j\right\} $ is the space of all bounded sequences in $X$ endowed with the supremum norm,
		\begin{equation*}
			\|(x_{j})_{j=1}^{\infty}\|_{\infty}=\sup_{j}\|x_{j}\|.
		\end{equation*}	
		\item  For $1\leq p < \infty$, $\ell_{p}^{w}\left(X\right):=\left\{(x_{j})_{j=1}^{\infty}\in X^{\mathbb{N}}:\sum\limits_{j=1}^{\infty}|x^{*}\left(x_{j}\right)|^{p} < \infty \textrm{ for each }   x^{*} \in X^{*}\right\}$ is the Banach space of \textit{ weakly $p$-summable sequences} endowed with the norm,
			\begin{equation*}
				\|(x_{j})_{j=1}^{\infty}\|_{p}^{w}=\sup_{x^{*}\in B_{X^{*}}}\left(\sum_{j=1}^{\infty}|x^{*}\left(x_{j}\right)|^{p}\right)^{1/p}.
			\end{equation*}
	Note that for $p=\infty, ~ \ell_{\infty}^{w}(X)=\ell_{\infty}(X).$
		\item  For $1\leq p \leq\infty$,
			$\ell_{p}\left<X\right>:=\left\{(x_{j})_{j=1}^{\infty}\in X^{\mathbb{N}}:\sum\limits_{j=1}^{\infty}|x_{j}^{*}\left(x_{j}\right)| < \infty \textrm{ for each } (x_{j}^{*})_{j=1}^{\infty} \in \ell_{p^{*}}^{w}(X^{*})\right\}$ is the Banach space of  \textit{Cohen strongly $p$-summable sequences} endowed with the norm,
			\begin{equation*}
				\|(x_{j})_{j=1}^{\infty}\|_{C,p}=\sup_{(x_{j}^{*})_{j=1} ^{\infty}\in B_{\ell_{p^{*}}^{w}(X^{*})}}\sum_{j=1}^{\infty}|x_{j}^{*}\left(x_{j}\right)|.
			\end{equation*}
	
			\item  For $1\leq p < \infty$,
			$\ell_p^{mid}(X):=\left\{(x_{j})_{j=1}^{\infty}\in X^{\mathbb{N}}: \sum\limits_{n=1}^{\infty}\sum\limits_{j=1}^{\infty}|x_{n}^{*}\left(x_{j}\right)|^{p} < \infty \textrm{ for each } (x_{n}^{*})_{n=1}^{\infty} \in \ell_{p}^{w}(X^{*})\right\}$ is the Banach space of \textit{mid $p$-summable sequences} endowed with the norm,
			\begin{equation*}
				\|(x_{j})_{j=1}^{\infty}\|_{p}^{mid}=\sup_{(x^{*}_{n})_{n=1}^{\infty}\in B_{\ell_{p}^{w}(X^{*})}}\left(\sum_{n=1}^{\infty}\sum_{j=1}^{\infty}|x_{n}^{*}\left(x_{j}\right)|^{p}\right)^{1/p}.
			\end{equation*}
		
		\end{itemize}
		Clearly, the following inclusion holds for each  $1\leq p \leq\infty$:
		\begin{equation}\label{eq:3}
			\ell_{p}\left<X\right>\subsetneq \ell_{p}\left(X\right)\subsetneq \ell_p^{mid}(X)\subsetneq \ell_{p}^{w}\left(X\right) 
		\end{equation}
		and hence it follows that $\ell_{\infty}^{mid}(X)=\ell_{\infty}(X)$ for all Banach spaces $X$.
		
	Recently, Botelho and Campos\cite{Botelho1} have introduced a new notion of dual for a vector-valued sequence class $S$ as,
	\begin{equation}\label{eq:4}
		S^{dual}(X)=\left\{\left(x_{j}\right)_{j}\in X^{\mathbb{N}}:\sum_{j=1}^{\infty}x^{*}_{j}\left(x_{j}\right) \text{converges for every} (x^{*}_{j})_{j}\in S(X^{*}) \right\}.
	\end{equation}
	In particular, the dual of the class of mid $p$-summable sequences can be defined as,
	$$\left(\ell_{p}^{mid}\right)^{dual}(X)=\left\{(x_{j})_{j}\in X^{\mathbb{N}}:\sum_{j=1}^{\infty}|x^{*}_{j}(x_{j})|<\infty, \text{ for each } (x^{*}_{j})_{j} \in \ell_{p}^{mid}(X^{*})\right\}.$$
	and, $\left(\ell_{p}^{mid}\right)^{dual}(X)$  is a Banach space endowed with the norm,
	\begin{equation*}
		\|(x_{j})_{j}\|_{p,mid}^{dual}=\sup_{(x^{*}_{j})_{j}\in B_{\ell_{p}^{mid}(X^{*})}}\sum_{j=1}^{\infty}|x^{*}_{j}(x_{j})|.
	\end{equation*}
	Let us note that the following inclusion holds for all Banach spaces $X$: 
	\begin{equation}\label{eq:5}
		\ell_{p^{*}}\left<X\right>\subseteq \left(\ell_{p}^{mid}\right)^{dual}(X) \subseteq \ell_{p^{*}}(X).
	\end{equation} 
	For improving the summability of sequences, several operator ideals have been defined and studied extensively; for example, the ideal of absolutely $p$-summing operators, Cohen $p$-summing operators, weakly mid $p$-summing operators etc.  
		\begin{Definition}(\cite{DiJaTo})
			A linear operator $T:X\rightarrow Y$ is said to  be \textit{absolutely $p$-summing} if the operator $\hat{T}:\ell_{p}^{w}(X)\rightarrow \ell_{p}(Y)$, defined by
			\begin{equation*}
				\hat{T}((x_{j})_{j}) = (T(x_{j}))_{j}, \text{ where }(x_{j})_{j}\in\ell_{p}^{w}(X)
			\end{equation*} is well defined and continuous.
		\end{Definition}
		The set of all absolutely $p$-summing operators from $X$ to $Y$ denoted by $\Pi_{p}\left(X,Y\right)$ is a Banach space endowed with the norm $\pi_{p}(T)=\|\hat{T}\|$ for all Banach spaces $X$ and $Y$. Futhermore, $[\Pi_{p},\pi_{p}]$ is a maximal Banach operator ideal (see \cite[17.1.3]{DiJaTo}).

		\begin{Definition}(\cite{Botelho})
			A linear operator $T:X\rightarrow Y$ is said to be \textit{weakly mid $p$-summing} if the map $\hat{T}:\ell_{p}^{w}(X)\rightarrow \ell_{p}^{mid}(Y)$, defined by
			\begin{equation*}
				\hat{T}((x_{j})_{j}) = (T(x_{j}))_{j}, \text{ where } (x_{j})_{j}\in\ell_{p}^{w}(X)
			\end{equation*} is well defined and continuous.
		\end{Definition}
		The class of weakly  mid $p$-summing operators from $X$ to $Y$ is denoted by $W_{p}^{mid}\left(X,Y\right)$. 	The next result proved in \cite{Karn} shows a close relationship between the class of weakly mid $p$-summing operators and absolutely $p$-summing operators.
		\begin{thm}\label{thm:Wp and Pip}
			For $1\leq p <\infty$ and $T\in \mathcal{L}(X,Y)$, the following statements are equivalent:
			
			(a) $T$ is weakly mid $p$-summing.
			
			(b) $ST\in \Pi_p(X,\ell_p)$ for each $S\in \mathcal{L}(Y, \ell_p)$.
			
			(c) $TU \in \Pi_p^d(\ell_p^*, Y)$ for each $U\in \mathcal{L}(\ell_p^*, X)$.
		\end{thm}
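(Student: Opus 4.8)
The plan is to reduce each of the three statements to one and the same scalar condition on $T$, namely
\begin{equation*}
\sum_{n=1}^{\infty}\sum_{j=1}^{\infty}|y_{n}^{*}(Tx_{j})|^{p}<\infty\quad\text{for all } (x_{j})_{j}\in\ell_{p}^{w}(X),\ (y_{n}^{*})_{n}\in\ell_{p}^{w}(Y^{*}).\tag{$\star$}
\end{equation*}
The two tools I will use throughout are the canonical isometric identifications $\mathcal{L}(Y,\ell_{p})\cong\ell_{p}^{w}(Y^{*})$, under which $S$ corresponds to $(y_{n}^{*})_{n}$ with $Sy=(y_{n}^{*}(y))_{n}$, and $\mathcal{L}(\ell_{p}^{*},X)\cong\ell_{p}^{w}(X)$, under which $U$ corresponds to $(x_{j})_{j}=(Ue_{j})_{j}$; both are standard (see \cite{DiJaTo}). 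Throughout, pointwise finiteness of the relevant summing map upgrades automatically to continuity by a routine closed graph argument, so I only track well-definedness.

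For the equivalence of (a) with $(\star)$: by definition $T$ is weakly mid $p$-summing exactly when $(Tx_{j})_{j}\in\ell_{p}^{mid}(Y)$ for every $(x_{j})_{j}\in\ell_{p}^{w}(X)$, and unwinding the definition of the mid $p$-summable norm this is literally the finiteness asserted in $(\star)$. For the equivalence of (a) with (b): with $S\leftrightarrow(y_{n}^{*})_{n}$ as above one has $\|STx_{j}\|_{\ell_{p}}^{p}=\sum_{n}|y_{n}^{*}(Tx_{j})|^{p}$, so $ST\in\Pi_{p}(X,\ell_{p})$ means $\sum_{j}\sum_{n}|y_{n}^{*}(Tx_{j})|^{p}<\infty$ for all $(x_{j})_{j}\in\ell_{p}^{w}(X)$. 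Letting $S$ range over $\mathcal{L}(Y,\ell_{p})$ is the same as letting $(y_{n}^{*})_{n}$ range over $\ell_{p}^{w}(Y^{*})$, and since the summand is nonnegative the order of summation is immaterial by Tonelli; this is again $(\star)$.

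For the equivalence of (a) with (c): identify $U\leftrightarrow(x_{j})_{j}=(Ue_{j})_{j}$ and recall that $TU\in\Pi_{p}^{d}(\ell_{p}^{*},Y)$ means $(TU)^{*}\in\Pi_{p}(Y^{*},(\ell_{p}^{*})^{*})=\Pi_{p}(Y^{*},\ell_{p})$. A direct computation gives $(TU)^{*}y^{*}=(y^{*}(Tx_{j}))_{j}$, whence $(TU)^{*}\in\Pi_{p}$ unfolds to $\sum_{k}\sum_{j}|y_{k}^{*}(Tx_{j})|^{p}<\infty$ for every $(y_{k}^{*})_{k}\in\ell_{p}^{w}(Y^{*})$; as before, letting $U$ range recovers all $(x_{j})_{j}\in\ell_{p}^{w}(X)$ and a Tonelli swap yields $(\star)$.

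The genuinely delicate point is the adjoint bookkeeping in (c): correctly using $(\ell_{p}^{*})^{*}=\ell_{p}$, computing $(TU)^{*}$ on $Y^{*}$, and matching the resulting double sum, with its two summation indices in the right roles, to $(\star)$. The remaining steps (the two identifications, the Tonelli swaps, and the closed graph upgrade from well-definedness to continuity) are routine. One should, however, treat $p=1$ separately, since there $\ell_{p}^{*}=\ell_{\infty}$ whereas the realization of an arbitrary weakly $1$-summable sequence as $(Ue_{j})_{j}$ is naturally tied to $c_{0}$ rather than $\ell_{\infty}$, so that the direction (c)$\Rightarrow$(a) needs to be verified with that identification in mind.
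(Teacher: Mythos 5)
A preliminary remark: the paper does not prove this theorem at all --- it quotes it from Karn and Sinha \cite{Karn} --- so your argument must be judged on its own merits rather than against an in-paper proof. For $1<p<\infty$ your reduction of (a), (b), (c) to the single condition $(\star)$ is correct and complete: the correspondences $\mathcal{L}(Y,\ell_p)\cong\ell_p^{w}(Y^{*})$ and $\mathcal{L}(\ell_{p^{*}},X)\cong\ell_p^{w}(X)$, $U\mapsto(Ue_j)_j$, are bijective, the coordinate computations of $ST$ and $(TU)^{*}$ are right, and the closed-graph and Tonelli steps are indeed routine.

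The genuine gap is the case $p=1$, and it is worse than your closing caveat suggests; in fact you flag the wrong direction. The direction you flag, (c)$\Rightarrow$(a), can be repaired even with $\ell_1^{*}=\ell_\infty$: for a finite sequence $(x_j)_{j=1}^{m}$ the operator $Ua=\sum_{j\le m}a_jx_j$ is bounded on all of $\ell_\infty$ with $\|U\|=\|(x_j)_{j=1}^{m}\|_1^{w}$, a closed-graph argument applied to $U\mapsto TU$ gives a uniform bound $\pi_1^{d}(TU)\le C\|U\|$, and monotone convergence then yields $(\star)$; no extension of operators from $c_0$ to $\ell_\infty$ is needed. What actually breaks is the direction you treat as unproblematic, (a)$\Rightarrow$(c): there you use $(\ell_p^{*})^{*}=\ell_p$ and the formula $(TU)^{*}y^{*}=(y^{*}(Tx_j))_j$, and for $p=1$ both fail, since $(\ell_\infty)^{*}\neq\ell_1$ and $(TU)^{*}y^{*}=y^{*}\circ T\circ U\in\ell_\infty^{*}$ can carry a purely finitely additive part invisible to the coordinates $(y^{*}(TUe_j))_j$. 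This is not a repairable bookkeeping issue: under the literal reading $\ell_1^{*}=\ell_\infty$, the implication (a)$\Rightarrow$(c) is false. Take $T=U=\mathrm{id}_{\ell_\infty}$: every operator $\ell_\infty\to\ell_1$ is $2$-summing by Grothendieck's theorem and hence $1$-summing because $\ell_1$ has cotype $2$ (Maurey; see \cite{DiJaTo}), so (b) holds --- hence (a) holds, by your own equivalence, which is valid at $p=1$ --- yet (c) would force $\mathrm{id}_{\ell_\infty^{*}}\in\Pi_1(\ell_\infty^{*},\ell_\infty^{*})$, contradicting the Dvoretzky--Rogers theorem. The statement at $p=1$ is true only under the standard convention (as in \cite{DiJaTo}, and implicitly in \cite{Karn}) that $\ell_{p^{*}}$ denotes $c_0$ when $p=1$; with that reading $\mathcal{L}(c_0,X)\cong\ell_1^{w}(X)$ is onto, $(c_0)^{*}=\ell_1$, and your argument goes through verbatim. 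As written, your $p=1$ case is incomplete in the direction you flagged and incorrect in the one you did not.
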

		
		It has been proved in \cite{Karn} that $[W_p^{mid},w_{p}^{mid}]$ is a normed operator ideal endowed with the norm
	\begin{equation*}
		w_{p}^{mid}(T)= \sup_{S\in B_{\mathcal{L}(Y,\ell_{p})}}\pi_{p}(ST).
	\end{equation*}
The completeness of the normed operator ideal $[W_{p}^{mid},w_{p}^{mid}]$ has been established in \cite{Fourie}. Using $w_{p}^{mid}(T)=\|\hat{T}\|$, the completeness of $[W_{p}^{mid},w_{p}^{mid}]$ has also been obtained in \cite{Botelho}. Also note that $[W_{p}^{mid},w_{p}^{mid}]$ is a maximal operator ideal (see \cite{zee}).
\begin{Definition}(\cite{Botelho})
A linear operator $T:X\rightarrow Y$ is said to be absolutely mid $p$-summing if $\hat{T}:\ell_{p}^{mid}(X)\rightarrow \ell_{p}(Y)$ defined as
	\begin{equation}\label{eq:14}
		\hat{T}((x_{j})_{j})= (Tx_{j})_{j}, \text{ where } (x_{j})_{j}\in \ell_{p}^{mid}(X)
	\end{equation}
	is well defined and continuous.
\end{Definition}
The class of all absolutely mid $p$-summing operators from $X$ to $Y$ denoted by $\Pi_{p}^{mid}(X,Y)$ is a Banach space endowed with the norm $\pi_{p}^{mid}(T)=\|\hat{T}\|$. Thus $[\Pi_{p}^{mid},\pi_{p}^{mid}]$ is a Banach operator ideal .
\\\\Generalizing an earlier work of Saphar\cite{Sap1}, Chevet \cite{che} and Saphar\cite{Sap2} have independently introduced the norms $d_{p}$ and $g_{p}$ as follows:	
\begin{align}\label{eq:1.7}
\begin{split}
	d_{p}\left(u\right)&= \inf\left\{\|(x_{j})_{j=1}^{n}\|_{p^{*}}^{w}\|(y_{j})_{j=1}^{n}\|_{p}:u=\sum_{j=1}^{n}x_{j}\otimes y_{j}\right\}\\
	\textnormal{and}\\
	g_{p}\left(u\right)&= \inf\left\{\|(x_{j})_{j=1}^{n}\|_{p}\|(y_{j})_{j=1}^{n}\|_{p^{*}}^{w}:u=\sum_{j=1}^{n}x_{j}\otimes y_{j}\right\}.
\end{split}
\end{align}

These norms are known as \textit{Chevet-Saphar tensor norms}. Note that $d_{p}$ and $g_{p}$ are transposes of each other and satisfy the following duality relations with the ideal of absolutely $p$-summing operators:
\begin{align}\label{eq:32}
\begin{split}
\left(X\hat{\otimes}_{d_{p}}Y\right)^{*}&\cong\Pi_{p^{*}}\left(X,Y^{*}\right)\\
\textnormal{and}\\
\left(X\hat{\otimes}_{g_{p}}Y\right)^{*}&\cong\Pi_{p^{*}}\left(Y,X^{*}\right) 
\end{split}
\end{align}
 where $X\hat{\otimes}_{d_{p}}Y$ and $X\hat{\otimes}_{g_{p}}Y$ denote the completion of the tensor products $X\otimes_{d_{p}}Y$ and $X\otimes_{g_{p}}Y$ respectively.

In this paper, we define a tensor norm $\alpha_{p}$ corresponding  to the operator ideal $W_{p}^{mid}$ and prove that $W_{p}^{mid}(X,Y^{*})\cong (X\hat{\otimes}_{\alpha_{p}}Y)^{*}$ for all Banach spaces $X$ and $Y$, and $1\leq p\leq\infty$.  The basic idea to define these tensor norms is an extension of the following interpretation of Chevet-Saphar tensor norms: 
\begin{align}\label{eq:1.9}
\begin{split}
d_{p}\left(u\right)&= \inf\left\{\|(x_{j})_{j=1}^{n}\|_{p^{*}}^{w}\|(y_{j})_{j=1}^{n}\|_{p^{*}}^{dual}:u=\sum_{j=1}^{n}x_{j}\otimes y_{j}\right\}\\
\textnormal{and}\\
g_{p}\left(u\right)&= \inf\left\{\|(x_{j})_{j=1}^{n}\|_{p^{*}}^{dual}\|(y_{j})_{j=1}^{n}\|_{p^{*}}^{w}:u=\sum_{j=1}^{n}x_{j}\otimes y_{j}\right\}.
\end{split}
\end{align}
It is noteworthy that \eqref{eq:1.9} is obtained from \eqref{eq:1.7}  by using the isometric isomorphism
$\left(\ell_{p^{*}}(X),\|\cdot\|_{p^{*}}\right) \cong\left(\ell_{p}^{dual}(X),\|\cdot\|_{p}^{dual}\right).$
The last section is devoted to the study of the operator ideal $\Pi_{p}^{mid}$; indeed, a tensor norm representation for $\Pi_{p}^{mid}$ has been obtained. Finally using the tensor norm representation of $\Pi_{p}^{mid}$, we characterize the adjoints of the absolutely mid $p$-summing operators as well as the operators whose adjoints are absolutely mid $p$-summing. 
 \section{Ideal of weakly mid $p$-summing operators}
In this section, we will define a tensor norm  corresponding to the operator ideal of weakly mid $p$-summing operators and obtain the dual ideal of $W_{p}^{mid}$ using this tensor norm. 

Let  $X,Y$ be Banach spaces and $1\leq p\leq \infty$. For every $u\in X\otimes Y$, we define,

\begin{equation}\label{eq:6}
	\alpha_{p}\left(u\right) = \inf\left\{\|(x_{j})_{j=1}^{n}\|_{p}^{w}\|(y_{j})_{j=1}^{n}\|_{p,mid}^{dual}:u=\sum_{j=1}^{n}x_{j}\otimes y_{j}\right\}
\end{equation}
where
$\|\cdot\|_{p,mid}^{dual}$ is the norm of the dual space $(\ell_{p}^{mid})^{dual}\left(\cdot\right)$. Let us note that 
\begin{equation}\label{eq:2.2}
	 d_{p^{*}}\left(\cdot\right)\leq \alpha_{p}\left(\cdot\right)
\end{equation} since  $\|\cdot\|_{p^{*}}\leq \|\cdot\|_{p,mid}^{dual}$ for each $1\leq p\leq \infty$.  
The first step towards proving $\alpha_{p}$ is a tensor norm is to prove that it is a reasonable cross norm.
\begin{pro}\label{thm:crossnorm alphap}
	For $1\leq p \leq\infty, \alpha_{p}$ is a reasonable cross norm on $X\otimes Y.$
\end{pro}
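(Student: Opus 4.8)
The plan is to verify the two defining features of a reasonable cross norm separately: first that $\alpha_p$ is genuinely a norm on $X\otimes Y$, and then the two-sided estimate $\varepsilon\le\alpha_p\le\pi$. Finiteness is immediate, since every $u$ admits a finite representation and each such representation contributes a finite quantity $\|(x_j)_{j=1}^n\|_p^w\,\|(y_j)_{j=1}^n\|_{p,mid}^{dual}$. Positive homogeneity $\alpha_p(\lambda u)=|\lambda|\,\alpha_p(u)$ follows by absorbing the scalar into one factor of a representation. Definiteness I would deduce at the end from the lower estimate rather than directly.

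For the lower estimate I would simply invoke \eqref{eq:2.2}: since $\|\cdot\|_{p^*}\le\|\cdot\|_{p,mid}^{dual}$ we have $d_{p^*}\le\alpha_p$, and as $d_{p^*}$ is a Chevet--Saphar tensor norm it already satisfies $\varepsilon\le d_{p^*}$; hence $\varepsilon\le\alpha_p$. In particular $\alpha_p(u)=0$ forces $\varepsilon(u)=0$, so $u=0$, which gives definiteness. For the upper estimate and the cross norm identity I would test the infimum on the single-term representation $u=x\otimes y$: by the sequence-class property~(ii) one has $\|x\cdot e_1\|_p^w=\|x\|$ and $\|y\cdot e_1\|_{p,mid}^{dual}=\sup_{x^*\in B_{X^*}}|x^*(y)|=\|y\|$, whence $\alpha_p(x\otimes y)\le\|x\|\|y\|$. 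Once subadditivity is available, applying this term by term to any representation $u=\sum_j x_j\otimes y_j$ and taking the infimum yields $\alpha_p(u)\le\pi(u)$; combined with $\varepsilon(x\otimes y)=\|x\|\|y\|\le\alpha_p(x\otimes y)$ this also upgrades the cross norm inequality to the equality $\alpha_p(x\otimes y)=\|x\|\|y\|$.

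The crux is the triangle inequality. Given $\epsilon>0$, I would choose near-optimal representations of $u$ and $v$ and form a representation of $u+v$ by concatenating the two lists of vectors. The point is that the product of the two factor-norms of the concatenation must be bounded by $\alpha_p(u)+\alpha_p(v)$, which forces a rescaling $x_j\mapsto t x_j,\ y_j\mapsto t^{-1}y_j$ inside each block, with $t$ chosen (Hölder-optimally) to balance the weak $\ell_p$ norm against the dual-mid norm. For $\|\cdot\|_p^w$ the concatenation obeys the clean estimate $\|(z_m)\|_p^w\le\big(\sum_i\tau_i^{\,p}\big)^{1/p}$ in terms of the block norms $\tau_i$, so the whole argument hinges on pairing this with the correct concatenation estimate for $\|\cdot\|_{p,mid}^{dual}$.

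Pinning down that estimate for the sequential dual norm is the step I expect to be hardest. Unlike the genuine $\ell_{p^*}$ norm appearing in $d_{p^*}$, the dual-mid norm does not split as an $\ell_{p^*}$-sum across blocks: arguing through the defining supremum over $B_{\ell_p^{mid}(Y^*)}$ and restricting a test sequence to each block only yields the lossy $\ell_1$-type bound $\|(w_m)\|_{p,mid}^{dual}\le\sum_i\beta_i$, because $\ell_p^{mid}$ is merely $p$-subadditive (not $p$-additive) under concatenation, so its dual is $p^*$-superadditive. I would therefore isolate a lemma describing the behaviour of $\|\cdot\|_{p,mid}^{dual}$ under finite direct sums; and should the naive block estimate prove too weak to recover subadditivity for $1<p<\infty$, I would fall back on realizing $\alpha_p$ as a dual (supremum) norm via the pairing with the operators underlying $W_p^{mid}$, which makes subadditivity automatic and sidesteps the concatenation difficulty altogether.
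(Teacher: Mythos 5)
Your plan reproduces the paper's own proof in every step but one, and the single step you flag as the crux is exactly where the paper's argument breaks down. Homogeneity, the chain $\varepsilon\le d_{p^*}\le\alpha_p$ from \eqref{eq:2.2} (hence definiteness), the identity $\alpha_p(x\otimes y)=\|x\|\|y\|$ via one-term representations, and the deduction $\alpha_p\le\pi$ once subadditivity is available are all handled in the paper exactly as you describe. For subadditivity the paper performs precisely the concatenation you outline: it normalizes representations so that $\|(x_{ij})_{j}\|_{p}^{w}\le(\alpha_p(u_i)+\epsilon)^{1/p}$ and $\|(y_{ij})_{j}\|_{p,mid}^{dual}\le(\alpha_p(u_i)+\epsilon)^{1/p^*}$, concatenates, and asserts that the concatenated second factor is at most $\left(\alpha_p(u_1)+\alpha_p(u_2)+2\epsilon\right)^{1/p^*}$. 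That assertion is exactly the $p^*$-subadditivity of $\|\cdot\|_{p,mid}^{dual}$ under concatenation whose availability you doubted, and your doubt is justified: it is false, not merely unproved. Take $p=2$ and $Y=\ell_1^2$, so $Y^*=\ell_\infty^2$. If $(x_n^*)_n=((a_n,b_n))_n\subset\ell_1^2$ has $\|(x_n^*)_n\|_2^w\le1$, then testing against $(1,\pm1)\in B_{\ell_\infty^2}$ gives $\sum_n|a_n\pm b_n|^2\le1$, and the parallelogram law yields $\sum_n\bigl(|x_n^*(e_1)|^2+|x_n^*(e_2)|^2\bigr)=\sum_n\bigl(|a_n|^2+|b_n|^2\bigr)\le1$; hence $\|(e_1,e_2)\|_{2}^{mid}=1$ in $\ell_\infty^2$. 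Using $(e_1,e_2)\subset\ell_\infty^2$ as a test sequence for the dual norm of $(e_1,e_2)\subset\ell_1^2$ gives $\|(e_1,e_2)\|_{2,mid}^{dual}\ge1+1=2$, although each block satisfies $\|(e_i)\|_{2,mid}^{dual}=\|e_i\|_{\ell_1}=1$; the bound the paper needs, $(1^2+1^2)^{1/2}=\sqrt2$, fails. This refutes the paper's displayed estimate itself: for $u_1=x\otimes e_1$, $u_2=x\otimes e_2\in X\otimes\ell_1^2$ with $\|x\|=1$, one has $\alpha_2(u_i)=1$, the one-term representations meet the paper's normalization, yet the concatenation gives $\|(x,x)\|_2^w\,\|(e_1,e_2)\|_{2,mid}^{dual}=\sqrt2\cdot2=2\sqrt2>\alpha_2(u_1)+\alpha_2(u_2)+2\epsilon$. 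The mechanism is the one you identify: $\|\cdot\|_{p}^{mid}$ is $p$-subadditive but not $p$-superadditive under concatenation, so its sequential dual is $p^*$-superadditive and only $\ell_1$-subadditive, and concatenation can deliver at best the quasi-norm estimate $\alpha_p(u_1+u_2)\le2^{1/p}\bigl(\alpha_p(u_1)+\alpha_p(u_2)\bigr)$. (By contrast, the analogous argument for $\gamma_p$ in Section 3 is sound, because $\|\cdot\|_{p^*}^{mid}$, being a supremum of $\ell_{p^*}$-sums, is $p^*$-subadditive against the $p$-additive $\|\cdot\|_p$; the trouble is specific to the dual sequence class.)

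Concerning your fallbacks: the first (a direct-sum lemma for $\|\cdot\|_{p,mid}^{dual}$) cannot succeed, since the estimate it would have to provide is the one refuted above. The second does produce a subadditive functional, namely $\alpha_p'(u)=\sup\{|\sum_{j}\langle Tx_j,y_j\rangle| : w_p^{mid}(T)\le1\}$, and the second half of the proof of Theorem \ref{Wp and alphap} (which is sound and does not presuppose the triangle inequality) shows $\alpha_p'\le\alpha_p$; but this proves a different statement. Indeed $\alpha_p'$ is the gauge of the closed convex hull of $\{u:\exists\text{ a representation with }\|(x_j)_j\|_p^w\|(y_j)_j\|_{p,mid}^{dual}\le1\}$, and its coincidence with the infimum formula \eqref{eq:6} is precisely the convexity of that set, i.e.\ the triangle inequality in dispute. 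So your diagnosis is correct and the hesitation was not a gap in your understanding: neither your argument nor the paper's establishes Proposition \ref{thm:crossnorm alphap} as stated (the statement may still be true, but it needs a genuinely different proof), and since Theorem \ref{Wp and alphap} and the results following it treat $\alpha_p$ as a norm, the flaw lies in the paper.
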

\begin{proof}
	
	It is evident that for any $\lambda \in \mathbb{K}$ and $u\in X\otimes Y$, $\alpha_{p}\left(\lambda u\right)=|\lambda|\alpha_{p}\left(u\right)$.

	Let $u_{1},u_{2}\in X\otimes Y$ and $\epsilon>0$. Since $\alpha_{p}$ is the infimum, we can find a representation of $u_{i}=\sum\limits_{j=1}^{n}x_{ij}\otimes y_{ij}$ such that $\|(x_{ij})_{j=1}^{n}\|_{p}^{w}\leq \left(\alpha_{p}\left(u_{i}\right)+\epsilon\right)^{1/p}$
	and $\|(y_{ij})_{j=1}^{n}\|_{p,mid}^{dual}\leq \left(\alpha_{p}\left(u_{i}\right)+\epsilon\right)^{1/p^{*}}$, where $i=1,2$. Concatenating the sequences $(x_{ij})_{j},(y_{ij})_{j}$ for $i=1,2$, we can write
	\begin{equation*}
	u_1+u_2=\sum_{i,j}x_{ij}\otimes y_{ij}
	\end{equation*}
where
	\begin{align*}
	\|(x_{ij})_{i,j}\|_{p}^{w}\|(y_{ij})_{i,j}\|_{p,mid}^{dual}&\leq \left(\alpha_{p}\left(u_{1}\right)+\alpha_{p}\left(u_{2}\right)+2\epsilon\right)^{1/p}\left(\alpha_{p}\left(u_{1}\right)+\alpha_{p}\left(u_{2}\right)+2\epsilon\right)^{1/p^{*}}\\
	&\leq \left(\alpha_{p}\left(u_{1}\right)+\alpha_{p}\left(u_{2}\right)+2\epsilon\right).
	\end{align*}
	Letting $\epsilon$ tend to zero, we obtain $\alpha_{p}(u_1+u_2)\leq \alpha_{p}(u_1)+\alpha_{p}(u_2)$.
	
	Note that  $\alpha_{p}(x\otimes y)=\|x\|\|y\|$ for all $x\in X$ and $y\in Y$. Therefore $\alpha_{p}\left(\cdot\right)\leq \pi\left(\cdot\right)$ by using the triangle inequality for $\alpha_p$. Also, it is clear from \eqref{eq:2.2}  that $\varepsilon\left(\cdot\right)\leq  \alpha_{p}\left(\cdot\right)$. Thus $\alpha_{p}(u)=0$ if and only if $u=0$. 
	This also shows that $\alpha_{p}$ is a reasonable cross norm. 
\end{proof} 
 From the definition of $\alpha_{p}$, it is clear that $\alpha_{p}$ is finitely generated. Also, the linear stability of the sequence spaces $\ell_{p}^{w}\left(\cdot\right)$ and $(\ell_{p}^{mid})^{dual}\left(\cdot\right)$ gives the uniform property of the finitely generated reasonable cross norm $\alpha_{p}$. Consequently,
\begin{thm}\label{tensor norm alphap}
	For $1\leq p\leq\infty, ~\alpha_{p}$ is a finitely generated tensor norm.
\end{thm}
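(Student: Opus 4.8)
By Proposition~\ref{thm:crossnorm alphap}, $\alpha_{p}$ is already a reasonable cross norm, so the plan is to verify the two remaining properties that upgrade it to a finitely generated tensor norm: the \emph{uniform} property, and the finite-generation identity \eqref{eq:2}. I would treat these in turn, the first being routine and the second hiding the real content.

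For uniformity, fix bounded operators $S\colon X_{1}\to Y_{1}$ and $T\colon X_{2}\to Y_{2}$ and an element $u=\sum_{j=1}^{n}x_{1j}\otimes x_{2j}\in X_{1}\otimes X_{2}$. Since $(S\otimes T)(u)=\sum_{j=1}^{n}Sx_{1j}\otimes Tx_{2j}$ is itself a representation, definition \eqref{eq:6} gives
\begin{equation*}
\alpha_{p}\bigl((S\otimes T)(u)\bigr)\le \|(Sx_{1j})_{j=1}^{n}\|_{p}^{w}\,\|(Tx_{2j})_{j=1}^{n}\|_{p,mid}^{dual}.
\end{equation*}
The weak factor is controlled by the classical linear stability of $\ell_{p}^{w}$, namely $\|(Sx_{1j})_{j}\|_{p}^{w}\le\|S\|\,\|(x_{1j})_{j}\|_{p}^{w}$, which follows from $S^{*}(B_{Y_{1}^{*}})\subseteq\|S\|\,B_{X_{1}^{*}}$. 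For the second factor I would prove the analogous linear stability of $(\ell_{p}^{mid})^{dual}$: writing $\|(Tx_{2j})_{j}\|_{p,mid}^{dual}=\sup\sum_{j}|(T^{*}x_{j}^{*})(x_{2j})|$ over $(x_{j}^{*})\in B_{\ell_{p}^{mid}(Y_{2}^{*})}$ and invoking the linear stability of $\ell_{p}^{mid}$ to get $\|(T^{*}x_{j}^{*})_{j}\|_{p}^{mid}\le\|T\|$, one obtains $\|(Tx_{2j})_{j}\|_{p,mid}^{dual}\le\|T\|\,\|(x_{2j})_{j}\|_{p,mid}^{dual}$. Combining the two estimates and taking the infimum over representations yields $\alpha_{p}((S\otimes T)(u))\le\|S\|\,\|T\|\,\alpha_{p}(u)$, i.e.\ uniformity.

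For finite generation, the inequality $\alpha_{p}^{X,Y}(u)\le\alpha_{p}^{M,N}(u)$ for every pair of finite-dimensional $M\subseteq X$, $N\subseteq Y$ with $u\in M\otimes N$ is immediate from uniformity applied to the norm-one inclusions $M\hookrightarrow X$ and $N\hookrightarrow Y$, whence $\alpha_{p}^{X,Y}(u)\le\inf_{M,N}\alpha_{p}^{M,N}(u)$. For the reverse inequality I would fix $\varepsilon>0$, take a representation $u=\sum_{j=1}^{n}x_{j}\otimes y_{j}$ with $\|(x_{j})_{j}\|_{p}^{w}\,\|(y_{j})_{j}\|_{p,mid}^{dual}\le\alpha_{p}^{X,Y}(u)+\varepsilon$, and seek finite-dimensional $M\supseteq\mathrm{span}\{x_{j}\}$, $N\supseteq\mathrm{span}\{y_{j}\}$ giving $\alpha_{p}^{M,N}(u)\le\|(x_{j})_{j}\|_{p}^{w,M}\,\|(y_{j})_{j}\|_{p,mid}^{dual,N}$. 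Here the weak factor causes no trouble, since every functional on $M$ extends to $X$ with the same norm (Hahn--Banach), so $\|(x_{j})_{j}\|_{p}^{w,M}=\|(x_{j})_{j}\|_{p}^{w,X}$.

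The crux -- and the step I expect to be the main obstacle -- is the behaviour of the dual-mid factor under change of ambient space. Unlike the weak norm, $\|(y_{j})_{j}\|_{p,mid}^{dual,N}$ is \emph{not} intrinsic: the restriction $Y^{*}\to N^{*}$ is a metric surjection, and a short computation gives only $\|(y_{j})_{j}\|_{p,mid}^{dual,Y}\le\|(y_{j})_{j}\|_{p,mid}^{dual,N}$, the subspace value decreasing monotonically as $N$ is enlarged towards $Y$. To close the argument one must show this decrease leaves no gap, i.e.\ that for a suitable finite-dimensional $N$ one has $\|(y_{j})_{j}\|_{p,mid}^{dual,N}\le\|(y_{j})_{j}\|_{p,mid}^{dual,Y}+\varepsilon$; equivalently, that the induced map $\ell_{p}^{mid}(Y^{*})\to\ell_{p}^{mid}(N^{*})$ is asymptotically a metric surjection onto its unit ball. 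I would attack this either through a lifting/local-reflexivity argument for mid $p$-summable sequences of functionals, or -- exploiting that $\alpha_{p}$ is designed to represent the \emph{maximal} ideal $W_{p}^{mid}$ -- through a weak-$*$ compactness argument extracting, from the functionals that witness $\|(y_{j})_{j}\|_{p,mid}^{dual,N}$ over the directed family of finite-dimensional $N$, a cluster point lying in $B_{\ell_{p}^{mid}(Y^{*})}$. Once this approximation is in hand, letting $\varepsilon\to0$ gives $\inf_{M,N}\alpha_{p}^{M,N}(u)\le\alpha_{p}^{X,Y}(u)$, establishing \eqref{eq:2} and completing the proof that $\alpha_{p}$ is a finitely generated tensor norm.
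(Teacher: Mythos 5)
Your uniformity argument is correct and is exactly the paper's route: the paper disposes of this half in one sentence by invoking the linear stability of $\ell_{p}^{w}(\cdot)$ and $(\ell_{p}^{mid})^{dual}(\cdot)$, which is precisely what you prove in detail. The gap is in the second half. Having correctly reduced finite generation to the claim that for every $\varepsilon>0$ there is a finite-dimensional $N\subseteq Y$ containing the $y_{j}$ with $\|(y_{j})_{j=1}^{n}\|_{p,mid}^{dual,N}\leq \|(y_{j})_{j=1}^{n}\|_{p,mid}^{dual,Y}+\varepsilon$, you do not prove this claim; you only name two strategies (local reflexivity, weak-$*$ compactness) that might prove it. Neither is routine. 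Hahn--Banach extensions of the functionals witnessing $\|(y_{j})\|_{p,mid}^{dual,N}$ control only their individual norms, not the $\ell_{p}^{mid}$-norm of the extended sequence: that norm is a supremum over all norm-one operators $Y^{*}\to\ell_{p}$, most of which do not factor through the restriction $Y^{*}\to N^{*}$, so it can a priori be much larger than the $\ell_{p}^{mid}(N^{*})$-norm one started with. The weak-$*$ compactness route faces the complementary obstruction: the $\ell_{p}^{mid}$-norm on $Y^{*}$ is defined by testing against $\ell_{p}^{w}(Y^{**})$, while a weak-$*$ cluster point of extended witnesses is controlled only against elements of $Y$. As written, your proposal establishes uniformity and one inequality of \eqref{eq:2}, and leaves the other resting on an unproven lemma.

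In fairness, the difficulty you isolate is real, and the paper does not address it either: its entire argument for this half is the assertion that finite generation is ``clear from the definition'' of $\alpha_{p}$, which is exactly as unclear as you say, because the factor $\|\cdot\|_{p,mid}^{dual}$ in \eqref{eq:6} is not intrinsic to subspaces --- only the inequality $\|\cdot\|_{p,mid}^{dual,Y}\leq\|\cdot\|_{p,mid}^{dual,N}$ is immediate, as you note. A way to close your gap without the lifting lemma is to obtain finite generation a posteriori: the proof of Theorem \ref{Wp and alphap} uses only Proposition \ref{thm:crossnorm alphap} and linear stability (i.e., your uniformity estimate), not finite generation, so one may first establish $(X\hat{\otimes}_{\alpha_{p}}Y)^{*}=W_{p}^{mid}(X,Y^{*})$ isometrically under trace duality. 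Since $[W_{p}^{mid},w_{p}^{mid}]$ is a maximal Banach operator ideal \cite{zee}, the representation theorem \cite[Theorem 17.5]{Defant} yields a finitely generated tensor norm $\beta$ with $(X\hat{\otimes}_{\beta}Y)^{*}=W_{p}^{mid}(X,Y^{*})$ under the same duality; then $\alpha_{p}$ and $\beta$ have identical dual unit balls on every $X\otimes Y$, and Hahn--Banach gives $\alpha_{p}(u)=\sup\{|\Phi(u)|:\Phi\in B_{(X\otimes_{\beta}Y)^{*}}\}=\beta(u)$, so $\alpha_{p}=\beta$ is finitely generated. Either supply such an argument or prove the lifting lemma; without one of the two, the proof is incomplete.
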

Next we prove that the space $(X\otimes_{\alpha_{p}}Y)^{*}$ can be identified with the class of weakly mid $p$-summing operators from $X$ to $Y^{*}$. We begin with the following lemma. 

\begin{Lemma}\label{lpmid dual for Lp spaces}
Let $1\leq p<\infty$. Then $\ell_{p^{*}}\left(X\right)=(\ell_{p}^{mid})^{dual}\left(X\right)$ if  $X^{*}$ is a subspace of $L_{p}(\mu)$ for some Borel measure $\mu$. Furthermore, $\|(x_{j})_{j}\|_{p^{*}}= \|(x_{j})_{j}\|_{p,mid}^{dual}$ for every $(x_{j})_{j}\in X$.
\end{Lemma}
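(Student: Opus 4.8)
The plan is to establish both the set equality and the isometric norm identity simultaneously, by reducing everything to the isometric coincidence $\ell_p^{mid}(X^*) = \ell_p(X^*)$. One half is already available: from \eqref{eq:5} (equivalently \eqref{eq:2.2}) we have $(\ell_p^{mid})^{dual}(X)\subseteq \ell_{p^*}(X)$ together with $\|(x_j)_j\|_{p^*}\le \|(x_j)_j\|_{p,mid}^{dual}$, so only the reverse inclusion and the reverse norm estimate remain. For these I would first note that the dual norm $\|(x_j)_j\|_{p,mid}^{dual}=\sup\{\sum_j|x_j^*(x_j)|:(x_j^*)_j\in B_{\ell_p^{mid}(X^*)}\}$ depends on $\ell_p^{mid}(X^*)$ only through its closed unit ball; hence if I can show $\ell_p^{mid}(X^*)=\ell_p(X^*)$ with identical norms, the supremum may be taken over $B_{\ell_p(X^*)}$ instead.

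The heart of the argument is therefore the sublemma: if $Z\hookrightarrow L_p(\mu)$ then $\ell_p^{mid}(Z)=\ell_p(Z)$ isometrically, which I apply with $Z=X^*$. Since $\|\cdot\|_p^{mid}\le\|\cdot\|_p$ always holds, I only need $\sum_j\|z_j\|^p\le(\|(z_j)_j\|_p^{mid})^p$ for $(z_j)_j\in\ell_p^{mid}(Z)$, and the idea is to feed the mid-norm the right test family in $\ell_p^w(Z^*)$. When $\mu$ is counting measure (so $Z\subseteq\ell_p$), the coordinate functionals $(e_k^*)_k$ restricted to $Z$ already suffice: they form a norm-one element of $\ell_p^w(Z^*)$ and satisfy $\sum_k\sum_j|e_k^*(z_j)|^p=\sum_j\|z_j\|_{\ell_p}^p$, so the mid-norm definition gives the estimate at once. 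For a general Borel measure I would replace coordinate functionals by conditional-expectation data: choosing increasing finite measurable partitions $\mathcal P_m=\{A_i^{(m)}\}_i$ of finite-measure sets generating the $\sigma$-algebra spanned by the $z_j$, the functionals $\phi_i^{(m)}(f)=\mu(A_i^{(m)})^{-1/p^*}\int_{A_i^{(m)}}f\,d\mu$ satisfy $\sum_i|\phi_i^{(m)}(f)|^p=\|E_mf\|_p^p$, where $E_m$ is the conditional expectation. Because $E_m$ is a contraction on $L_p$, the family $(\phi_i^{(m)})_i$ lies in the unit ball of $\ell_p^w(Z^*)$, so the mid-norm bounds $\sum_j\|E_m z_j\|_p^p\le(\|(z_j)_j\|_p^{mid})^p$; letting $m\to\infty$ and using $E_m z_j\to z_j$ in $L_p$ together with Fatou yields the desired inequality.

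With the coincidence $\ell_p^{mid}(X^*)=\ell_p(X^*)$ in hand, the final step is a direct evaluation of $\|(x_j)_j\|_{p,mid}^{dual}$ as a supremum over $B_{\ell_p(X^*)}$. The upper bound $\sum_j|x_j^*(x_j)|\le\|(x_j^*)_j\|_p\,\|(x_j)_j\|_{p^*}$ is Hölder's inequality, which gives $\|(x_j)_j\|_{p,mid}^{dual}\le\|(x_j)_j\|_{p^*}$ and, in particular, $\ell_{p^*}(X)\subseteq(\ell_p^{mid})^{dual}(X)$; the matching lower bound comes from choosing Hahn--Banach norming functionals for each $x_j$ and optimizing the scalars against the $\ell_p$-constraint, recovering $(\sum_j\|x_j\|^{p^*})^{1/p^*}$. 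Combined with the free inclusion from \eqref{eq:5}, this closes the isometry and establishes the ``Furthermore'' claim. I expect the genuine obstacle to be the sublemma in the non-atomic case: the clean coordinate-functional construction is unavailable, and one must verify that the conditional-expectation functionals genuinely sit in $B_{\ell_p^w(Z^*)}$ (contractivity of $E_m$) and that the limit is controlled. This is precisely where the hypothesis $X^*\hookrightarrow L_p(\mu)$, with the exponent of $L_p$ matched to the summability index $p$, is essential.
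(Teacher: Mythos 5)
Your proof is correct, and its overall skeleton matches the paper's: both arguments reduce the lemma to the coincidence $\ell_{p}^{mid}(X^{*})=\ell_{p}(X^{*})$, then obtain $\ell_{p^{*}}(X)\subseteq(\ell_{p}^{mid})^{dual}(X)$ from H\"older's inequality and get the reverse inclusion and reverse norm estimate for free from \eqref{eq:5} and \eqref{eq:2.2}. The genuine difference is how that coincidence is obtained: the paper simply cites \cite[Theorem 4.5]{Karn}, which states precisely that $X^{*}\hookrightarrow L_{p}(\mu)$ forces $\ell_{p}(X^{*})=\ell_{p}^{mid}(X^{*})$, whereas you reprove it from scratch --- coordinate functionals when $\mu$ is counting measure, and in general the normalized averaging functionals $\phi_{i}^{(m)}$, where the exponent identity $-p/p^{*}=1-p$ gives $\sum_{i}|\phi_{i}^{(m)}(f)|^{p}=\|E_{m}f\|_{p}^{p}$ and contractivity of $E_{m}$ places $(\phi_{i}^{(m)})_{i}$ in $B_{\ell_{p}^{w}(Z^{*})}$ (via Goldstine, since the $\ell_{p}^{w}$-norm of a sequence of functionals may be computed over $B_{Z}$). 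This is essentially a reconstruction of Karn--Sinha's argument. Your longer route buys two things: self-containedness, and the fact that the coincidence is \emph{isometric}, which is exactly what the ``Furthermore'' clause needs --- the paper's H\"older step silently uses $\|(\phi_{j})_{j}\|_{p}\leq 1$ whenever $(\phi_{j})_{j}\in B_{\ell_{p}^{mid}(X^{*})}$, i.e.\ the isometric form of the cited theorem, without comment, so your version actually makes the norm equality explicit where the paper's proof only delivers the set equality. The cost is the measure theory you flag yourself: you still owe the construction of partitions with $E_{m}z_{j}\rightarrow z_{j}$ in $L_{p}$ for all $j$. This is standard and can even be simplified: since the inequality $\sum_{j}\|z_{j}\|^{p}\leq\left(\|(z_{j})_{j}\|_{p}^{mid}\right)^{p}$ need only be verified on finite initial segments $z_{1},\dots,z_{J}$, one fixed partition fine enough that each $z_{j}$ is within $\epsilon$ of a partition-measurable simple function suffices (then $\|z_{j}-E_{m}z_{j}\|_{p}\leq 2\epsilon$ by contractivity), so no martingale convergence or $\sigma$-algebra generation argument is actually needed. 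With that detail filled in, your argument is complete.
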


\begin{proof}
Consider $(x_{j})_{j}\in \ell_{p^{*}}(X)$. Using \cite[Theorem 4.5]{Karn},  $\ell_{p}(X^{*})= \ell_{p}^{mid}(X^{*})$, and therefore
\begin{equation*}
\sum_{j=1}^{\infty}\left|\phi_{j}(x_{j})\right| \leq \left(\sum_{j=1}^{\infty}\|\phi_{j}\|^{p}\right)^{\frac{1}{p}}\left(\sum_{j=1}^{\infty}\|x_{j}\|^{p^{*}}\right)^{\frac{1}{p^{*}}}
< \infty,  \text{ for all } (\phi_{j})_{j}\in \ell_{p}^{mid}(X^{*}).
\end{equation*} 
Thus $\ell_{p^{*}}\left(X\right)\subset(\ell_{p}^{mid})^{dual}\left(X\right)$. The reverse inclusion follows from $\eqref{eq:5}$. 	  	 
\end{proof}
\begin{thm}\label{Wp and alphap}
	Let $1\leq p\leq\infty$ and $X$, $Y$ be Banach spaces. An operator $T:X \rightarrow Y^{*}$ corresponds to a bounded linear functional on $X\hat{\otimes}_{\alpha_{p}}Y$ if and only if $T\in W_{p}^{mid}(X,Y^{*})$. The operator norm of the bounded linear functional corresponding to $T$  in $\left(X\hat{\otimes}_{\alpha_{p}}Y\right)^{*}$ is equal to $w_{p}^{mid}(T)$. 	
\end{thm}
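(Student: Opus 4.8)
The plan is to run the argument through the canonical identification of the dual of the tensor product with operators into $Y^*$. Every $T\in\mathcal{L}(X,Y^*)$ determines a linear functional $\varphi_T$ on the algebraic tensor product $X\otimes Y$ by $\varphi_T\left(\sum_j x_j\otimes y_j\right)=\sum_j\langle Tx_j,y_j\rangle$, and $T\mapsto\varphi_T$ is a linear bijection onto the space of all linear functionals on $X\otimes Y$. Since passing to the completion does not change the dual, $\varphi_T$ extends to an element of $\left(X\hat{\otimes}_{\alpha_p}Y\right)^*$ precisely when there is a constant $C$ with $|\varphi_T(u)|\le C\,\alpha_p(u)$ for all $u\in X\otimes Y$, the least such $C$ being $\|\varphi_T\|$. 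Thus the whole theorem reduces to the two-sided estimate $\|\varphi_T\|=w_p^{mid}(T)$, where one side is finite if and only if the other is.

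For the bound $\|\varphi_T\|\le w_p^{mid}(T)$ I would fix a representation $u=\sum_{j=1}^n x_j\otimes y_j$ and estimate directly, using that $(Tx_j)_j\in\ell_p^{mid}(Y^*)$ when $T\in W_p^{mid}(X,Y^*)$:
\[
|\varphi_T(u)|\le\sum_{j=1}^n|\langle Tx_j,y_j\rangle|\le\|(Tx_j)_{j=1}^n\|_p^{mid}\,\|(y_j)_{j=1}^n\|_{p,mid}^{dual}\le w_p^{mid}(T)\,\|(x_j)_{j=1}^n\|_p^{w}\,\|(y_j)_{j=1}^n\|_{p,mid}^{dual},
\]
where the middle inequality is the defining duality of $\|\cdot\|_{p,mid}^{dual}$ applied to the admissible test sequence $(Tx_j)_j$, and the last step is $w_p^{mid}(T)=\|\hat T\|$. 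Taking the infimum over representations of $u$ gives $|\varphi_T(u)|\le w_p^{mid}(T)\,\alpha_p(u)$, which is the claim.

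The converse is the substantial half. Assuming $\varphi_T$ is $\alpha_p$-bounded, I would fix a finite family $(x_j)_{j=1}^n$ in $X$ and a finite family $(y_j)_{j=1}^n$ in $Y$ and combine boundedness of $\varphi_T$ with $\alpha_p\left(\sum_j x_j\otimes y_j\right)\le\|(x_j)_{j=1}^n\|_p^{w}\,\|(y_j)_{j=1}^n\|_{p,mid}^{dual}$ to get
\[
\left|\sum_{j=1}^n\langle Tx_j,y_j\rangle\right|\le\|\varphi_T\|\,\|(x_j)_{j=1}^n\|_p^{w}\,\|(y_j)_{j=1}^n\|_{p,mid}^{dual}.
\]
Taking the supremum over all $(y_j)_{j=1}^n$ in the unit ball of $(\ell_p^{mid})^{dual}(Y)$, the right-hand side is at most $\|\varphi_T\|\,\|(x_j)_{j=1}^n\|_p^{w}$, so the argument hinges on the \emph{norming identity}
\[
\|(z_j)_{j=1}^n\|_p^{mid}=\sup\left\{\left|\sum_{j=1}^n z_j(y_j)\right|:(y_j)_{j=1}^n\in Y,\ \|(y_j)_{j=1}^n\|_{p,mid}^{dual}\le 1\right\}
\]
for finite sequences $(z_j)=(Tx_j)$ in $Y^*$. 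Granting this, $\|(Tx_j)_{j=1}^n\|_p^{mid}\le\|\varphi_T\|\,\|(x_j)_{j=1}^n\|_p^{w}$ for every $n$, so $(Tx_j)_j\in\ell_p^{mid}(Y^*)$, the induced map $\hat T\colon\ell_p^{w}(X)\to\ell_p^{mid}(Y^*)$ is bounded, and $w_p^{mid}(T)=\|\hat T\|\le\|\varphi_T\|$; together with the first half this yields the isometry.

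I expect the norming identity to be the main obstacle. One inequality is, once more, just the defining duality of $\|\cdot\|_{p,mid}^{dual}$; the reverse inequality is a genuine duality statement asserting that the finitely supported elements of $(\ell_p^{mid})^{dual}(Y)$, paired via $(z_j)\mapsto\sum_j z_j(y_j)$, norm the finitely supported sequences of $\ell_p^{mid}(Y^*)$. I would prove it by a Hahn--Banach/bipolar separation argument on the section indexed by $1,\dots,n$, working throughout with finitely supported sequences so that only the algebraic tensor product (where $\alpha_p$ is defined) is involved. The delicate point is that $\|\cdot\|_p^{mid}$ on $Y^*$ is computed against $B_{\ell_p^{w}(Y^{**})}$ rather than against elements of $Y$; this bidual is reconciled with $Y$ by a truncation together with Goldstine's theorem, pulling the norming functionals back into $B_Y$ with arbitrarily small loss. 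This norming duality between $\ell_p^{mid}$ and $(\ell_p^{mid})^{dual}$ is the sequence-space shadow of the representation theorem for maximal ideals, and once established the two inequalities above close the proof.
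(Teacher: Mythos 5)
Your first half is exactly the paper's: both estimate $|\varphi_T(u)|\le\|(Tx_j)_j\|_p^{mid}\|(y_j)_j\|_{p,mid}^{dual}\le w_p^{mid}(T)\,\|(x_j)_j\|_p^w\|(y_j)_j\|_{p,mid}^{dual}$ via the defining duality of $\|\cdot\|_{p,mid}^{dual}$ and take the infimum over representations. The converse is where you genuinely diverge. The paper never states or uses your norming identity; instead it invokes Theorem \ref{thm:Wp and Pip}: given $\Phi\in(X\hat{\otimes}_{\alpha_p}Y)^*$ it shows $ST\in\Pi_p(X,\ell_p)$ for every $S\in\mathcal{L}(Y^*,\ell_p)$, by pairing $ST$ with $u=\sum_j x_j\otimes\beta_j\in X\otimes\ell_{p^*}$, writing $\sum_j\langle Tx_j,S^*\beta_j\rangle$ as the value of a canonical bidual extension ${}^*\Phi$ of $\Phi$ (boundedness cited from Ryan), and then using Lemma \ref{lpmid dual for Lp spaces} to convert $\|(\beta_j)_j\|_{p,mid}^{dual}$ into $\|(\beta_j)_j\|_{p^*}$, so that the Chevet--Saphar duality \eqref{eq:32} applies and $w_p^{mid}(T)=\sup_S\pi_p(ST)\le\|\Phi\|$. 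That route concentrates all duality work in classical, citable facts about $\Pi_p$ and $d_{p^*}$. Your route is more self-contained and proves a stronger intermediate statement --- that finite sections of $(\ell_p^{mid})^{dual}(Y)$ norm finite sections of $\ell_p^{mid}(Y^*)$ --- from which the equivalences (b)$\iff$(c)$\iff$(d) of the subsequent proposition follow directly instead of as corollaries of the theorem; it also makes visible why the theorem is really a maximality statement at the sequence-space level.

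One substantive caveat: your norming identity is true, but ``truncation together with Goldstine's theorem'' does not prove it as stated. After truncating to $(\psi_m)_{m=1}^M\in B_{\ell_p^w(Y^{**})}$, approximating each $\psi_m$ separately by elements of $\|\psi_m\|B_Y$ (which is all Goldstine for $Y$ gives) destroys the \emph{joint} constraint: the weak-$\ell_p$ norm couples the coordinates, and coordinatewise approximation can inflate $\|(y_m)_{m=1}^M\|_p^w$ by a factor of order $M^{1/p}$. What you need is the principle of local reflexivity: view $(\psi_m)_{m=1}^M$ as the operator $\Psi:\ell_{p^*}^M\to Y^{**}$, $e_m\mapsto\psi_m$, put $E=\Psi(\ell_{p^*}^M)$ and $F=\mathrm{span}\{Tx_1,\dots,Tx_n\}\subset Y^*$, and take $R:E\to Y$ with $\|R\|\le 1+\epsilon$ and $\langle R e,z\rangle=\langle e,z\rangle$ for $e\in E$, $z\in F$; then $(R\psi_m)_m$ lies in $(1+\epsilon)B_{\ell_p^w(Y)}$ and reproduces the pairing values exactly. (Equivalently one may apply Goldstine to the space $\mathcal{L}(\ell_{p^*}^M,Y)$, but identifying its bidual with $\mathcal{L}(\ell_{p^*}^M,Y^{**})$ is itself a fact of local-reflexivity strength, not an instance of Goldstine for $Y$.) With that repair your lower-semicontinuity/bipolar argument closes, and the passage from finite sections to boundedness of $\hat T:\ell_p^w(X)\to\ell_p^{mid}(Y^*)$ by monotone convergence is fine; it is worth noting that the paper's own proof faces the same bidual difficulty (boundedness of ${}^*\Phi$ on $X\otimes Y^{**}$) and likewise resolves it by citation rather than by elementary means.
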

\begin{proof}
 We will assume $1\leq p< \infty$; the case $p=\infty$ follows from \eqref{eq:2.2} and \cite[Proposition 6.6 ]{ryan} as $\alpha_{\infty}=\pi$ and $W_{\infty}^{mid}(X,Y^*)=\mathcal{L}(X,Y^*)$. Let $\Phi \in \left(X\hat{\otimes}_{\alpha_{p}}Y\right)^{*}$. Then  define $T\equiv T_{\Phi}:X\rightarrow Y^{*}$ as $\left<Tx,y\right>=\Phi(x\otimes y)$ for all $x\in X,~ y\in Y$.

	In order to prove that $T \in W_{p}^{mid}(X,Y^{*})$, we need to show that $ST\in\Pi_p(X, \ell_{p})$ for each $S\in \mathcal{L}(Y^{*},\ell_{p})$ in view of Theorem \ref{thm:Wp and Pip}, or equivalently \eqref{eq:32} imply that it is enough to show that $ST$ can be identified with a continuous linear functional defined on $ X\hat{\otimes}_{d_{p^{*}}}\ell_{p^{*}}$.

Define $^{*}\Phi :X^{**}\hat{\otimes}_{\alpha_{p}}Y^{**}\rightarrow \mathbb{K}$ as 
$^*\Phi(x^{**}\otimes y^{**}) =<T^*y^{**}, x^{**}>,~x^{**}\in X^{**},~ y^{**}\in Y^{**}$. Note that $^{*}\Phi$ is the continuous linear functional corresponding to the canonical left extension of the bilinear form $B_{\Phi}$, where $B_{\Phi}:X\times Y\rightarrow \mathbb{K}$ is defined as 
$B_{\Phi}(x,y)=\Phi(x \otimes y)$ for all $x\in X$ and $y\in Y$.
 Then for any $S\in \mathcal{L}(Y^{*},\ell_{p})$ and $u=\sum\limits_{j=1}^{n}x_{j}\otimes\beta_{j}\in X\otimes \ell_{p^{*}}$,  we have
	\begin{align*}
		\left|\sum_{j=1}^{n}\left<STx_{j},\beta_{j}\right>\right|&=\left|\sum_{j=1}^{n}\left<Tx_{j},S^{*}\beta_{j}\right>\right|= \left|^{*}\Phi\left(\sum_{j=1}^{n}x_{j}\otimes S^{*}\beta_{j}\right)\right|\\
		&\leq \left\|\Phi\right\|\left\|S^{*}\right\|\left
		\|(x_{j})_{j}\right\|_{p}^{w}\left\|(\beta_{j})_{j}\right\|_{p,mid}^{dual}\\
		&= \left\|\Phi\right\|\left\|S^{*}\right\|\left
		\|(x_{j})_{j}\right\|_{p}^{w}\left\|(\beta_{j})_{j}\right\|_{p^{*}}
	\end{align*}
by \cite[Theorem 6.5]{ryan} and Lemma \ref{lpmid dual for Lp spaces}.
 Taking infimum over all representations of $u$, we obtain the inequality,
\begin{equation}
	\left|\left<ST,u\right>\right|\leq \|\Phi\|\|S^{*}\|d_{p^{*}}(u).
	\end{equation}
	Thus $ST\in\Pi_p(X, \ell_{p})$ and $\pi_p(ST)\leq \|\Phi\|\|S\|$.
Also, note that	\begin{align} \label{eq:7}
		w_{p}^{mid}\left(T\right)=\sup_{S\in B_{\mathcal{L}(Y^{*},\ell_{p})}}\pi_{p}\left(ST\right)
= \sup_{S\in B_{\mathcal{L}(Y^{*},\ell_{p})}} \sup_{u\in B_{X\otimes_{d_{p^{*}}} \ell_{p^{*}}}}\left|\left<ST,u\right>\right|\leq\|\Phi\|.
	\end{align}
On the other hand, for $T\in W_{p}^{mid}(X,Y^{*})$, define $\Phi\equiv\Phi_{T}:X\otimes_{\alpha_{p}}Y\rightarrow \mathbb{K}$ as $\Phi(u)= \sum\limits_{j=1}^{n}\left<Tx_{j}, y_{j}\right>$ for each $u=\sum\limits_{j=1}^{n}x_{j}\otimes y_{j} \in X\otimes_{\alpha_{p}} Y$. Then by using the definition of $\|.\|_{p,mid}^{dual}$-norm, we have
	\begin{align*}
		\left|\Phi\left(u\right)\right|&\leq \sum_{j=1}^{n}\left|\left<Tx_{j},y_{j}\right>\right|
		= \|(x_{j})_{j=1}^{n}\|_{p}^{w}\|(y_{j})_{j=1}^{n}\|_{p,mid}^{dual}\sum_{j=1}^{n}\left|\left<T\left(\frac{x_{j}}{\|(x_{j})_{j=1}^{n}\|_{p}^{w}}\right),\frac{y_{j}}{\|(y_{j})_{j=1}^{n}\|_{p,mid}^{dual}}\right>\right|\\
		& \leq \|(x_{j})_{j}\|_{p}^{w}\|(y_{j})_{j}\|_{p,mid}^{dual} \left\|\left(T\left(\frac{x_{j}}{\|(x_{j})_{j=1}^{n}\|_{p}^{w}}\right)\right)_{j=1}^{n}\right\|_{p}^{mid}\left\|\left(\frac{y_{j}}{\|(y_{j})_{j=1}^{n}\|_{p,mid}^{dual}}\right)_{j=1}^{n}\right\|_{p,mid}^{dual}.\\
	\end{align*}
	Taking infimum over all representations of $u$, we get
	\begin{equation}\label{eq:8}
		\left|\Phi\left(u\right)\right|\leq w_{p}^{mid}\left(T\right)\alpha_{p}\left(u\right).
	\end{equation}
 Thus $\Phi\in \left(X\hat{\otimes}_{\alpha_{p}}Y\right)^{*}$ and
	$
		w_{p}^{mid}\left(T\right)=\|\Phi\|
	$
by	using $\eqref{eq:7}$ and $\eqref{eq:8}$.
\end{proof}
The next proposition gives some equivalent descriptions for weakly mid $p$-summing operators.
\begin{pro}
	Let $1\leq p\leq\infty$. Then the following are equivalent:
	\begin{enumerate}[(a)]
		\item $T\in W_{p}^{mid}(X,Y)$.
		\item There exists $C>0$ such that,
		\begin{equation}\label{eq:9}
			\left|\sum_{j=1}^{n}\left<Tx_{j}\,f_{j}\right>\right|\leq C \|(x_{j})_{j=1}^{n}\|_{p}^{w}\|(f_{j})_{j=1}^{n}\|_{p,mid}^{dual}
		\end{equation}
		for every finite sequences $\left(x_{1},x_{2},...,x_{n}\right)$ and $\left(f_{1},f_{2},...,f_{n}\right)$ in $X$ and $Y^{*}$ respectively.
		\item There exists $C>0$ such that,
		\begin{equation}\label{eq:10}
			\|(Tx_{j})_{j=1}^{n}\|_{p}^{mid}\leq C \|(x_{j})_{j=1}^{n}\|_{p}^{w}
		\end{equation}
		for every finite sequence $\left(x_{1},x_{2}\,...,x_{n}\right)$
		in $X$.
		\item The map $\hat{T}:\ell_{p}^{w}(X)\rightarrow \ell_{p}^{mid}(Y)$ is well defined and continuous.
		\item There exists $C>0$ such that,
		\begin{equation}\label{eq:11}
			\left(\sum_{m=1}^{\infty}\sum_{j=1}^{n}\left|f_{m}(Tx_{j})\right|^{p}\right)^{1/p}\leq C \|(x_{j})_{j=1}^{n}\|_{p}^{w} \|(f_{m})_{m=1}^{\infty}\|_{p}^{w}
		\end{equation}
		for every finite sequence  $\left(x_{1},x_{2}\,...,x_{n}\right)$
		in $X$ and $\left(f_{m}\right)_{m}$
		in $\ell_{p}^{w}\left(Y^{*}\right)$.
		\item There exists $C>0$ such that,
		\begin{equation}\label{eq:12}
			\left(\sum_{m=1}^{\infty}\sum_{j=1}^{\infty}\left|f_{m}(Tx_{j})\right|^{p}\right)^{1/p}\leq C \|(x_{j})_{j}\|_{p}^{w} \|(f_{m})_{m}\|_{p}^{w}
		\end{equation}
		for every sequences $\left(x_{j}\right)_{j}$ in $\ell_{p}^{w}(X)$ and $\left(f_{m}\right)_{m}$
		in $\ell_{p}^{w}\left(Y^{*}\right)$.
	\end{enumerate}
	Furthermore,\\
$
			w_{p}^{mid}(T)=\|\hat{T}\|=\inf\{C:\eqref{eq:9}  \textnormal{ holds} \}= \inf\{C:\eqref{eq:10} \textnormal{ holds}\}= \inf\{C:\eqref{eq:11} \textnormal{ holds}\} =\inf\{C:\eqref{eq:12} \textnormal{ holds}\}.
	$

\end{pro}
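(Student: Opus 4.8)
The plan is to prove the six conditions equivalent by grouping them according to difficulty, carrying the optimal constant $C$ through every implication so that the concluding string of norm equalities falls out for free. I would first dispose of the cases that are mere reformulations. The equivalence (a) $\Leftrightarrow$ (d) is nothing but the definition of $W_p^{mid}(X,Y)$ together with the already-recorded identity $w_p^{mid}(T)=\|\hat{T}\|$; and (c) $\Leftrightarrow$ (e) is obtained simply by unravelling the definition of $\|\cdot\|_p^{mid}$, since optimising the left-hand side of \eqref{eq:11} over $(f_m)_m\in B_{\ell_p^w(Y^*)}$ produces exactly $\|(Tx_j)_{j=1}^n\|_p^{mid}$, so \eqref{eq:10} and \eqref{eq:11} carry the same best constant. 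Throughout I would treat $1\le p<\infty$ and note that $p=\infty$ is degenerate, all inequalities collapsing to the boundedness of $T$ via $\ell_\infty^{mid}=\ell_\infty$ and $W_\infty^{mid}=\mathcal{L}$.

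Next I would handle the passage between finite and infinite formulations, namely (c) $\Leftrightarrow$ (d) and (e) $\Leftrightarrow$ (f), by a truncation-and-monotonicity argument. Restricting $\hat{T}$ to finitely supported sequences immediately gives (c) from (d) and (e) from (f). For the converses, observe that for a fixed family $(f_m)_m$ the double sum $\sum_m\sum_{j=1}^n|f_m(Tx_j)|^p$ is nondecreasing in $n$, and that $\|(x_1,\dots,x_n,0,\dots)\|_p^w\nearrow\|(x_j)_j\|_p^w$; a monotone-convergence interchange of the supremum over $(f_m)_m$ with the limit in $n$ then propagates the bound on truncations to the whole sequence, so that $(Tx_j)_j\in\ell_p^{mid}(Y)$ with $\|\hat{T}\|\le C$. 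Every constant appearing here coincides with $\|\hat{T}\|$.

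The substance of the proposition is the equivalence (b) $\Leftrightarrow$ (c), which expresses the duality between the class $\ell_p^{mid}$ and its Botelho--Campos dual $(\ell_p^{mid})^{dual}$. For (c) $\Rightarrow$ (b) I would invoke the generalised Hölder inequality: viewing each $Tx_j$ inside $Y^{**}$ through the canonical embedding $J$, the defining supremum of $\|\cdot\|_{p,mid}^{dual}$ gives $\sum_j|f_j(Tx_j)|=\sum_j|(JTx_j)(f_j)|\le\|(JTx_j)_j\|_{p,Y^{**}}^{mid}\,\|(f_j)_j\|_{p,mid}^{dual}$, while a short computation restricting test functionals from $Y^{***}$ to $Y^*$ along $J^*$ yields $\|(JTx_j)_j\|_{p,Y^{**}}^{mid}\le\|(Tx_j)_j\|_{p,Y}^{mid}$, so that (c) supplies the constant. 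For (b) $\Rightarrow$ (c) I would instead recover the mid-norm from the dual pairing: given $(x_j)_{j=1}^n$ and $\epsilon>0$, choose $(g_m)_m\in B_{\ell_p^w(Y^*)}$ nearly attaining $\|(Tx_j)_{j=1}^n\|_p^{mid}$, pick scalars $(c_{m,j})$ with $\sum_{m,j}|c_{m,j}|^{p^*}\le1$ realising the associated $\ell_p$-norm of the array $(g_m(Tx_j))_{m,j}$ as an exact pairing, and set $f_j:=\sum_m c_{m,j}g_m\in Y^*$. Then $\sum_j f_j(Tx_j)$ equals $\|(Tx_j)_{j=1}^n\|_p^{mid}$ up to $\epsilon$, and a double application of Hölder's inequality (first in $m$, then in $j$), combined with the definition of the mid-norm in $\ell_p^{mid}(Y^{**})$ and the isometric embedding $Y^*\hookrightarrow Y^{***}$, shows $\|(f_j)_{j=1}^n\|_{p,mid}^{dual}\le1$; inserting this into \eqref{eq:9} and letting $\epsilon\to0$ gives (c) with the same $C$.

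The main obstacle is precisely this last verification that the explicitly constructed functionals $f_j=\sum_m c_{m,j}g_m$ satisfy $\|(f_j)_{j=1}^n\|_{p,mid}^{dual}\le1$: it is here that one must pass to the bidual and control $\sum_m\sum_j|\phi_j(g_m)|^p$ uniformly over $(\phi_j)_j\in B_{\ell_p^{mid}(Y^{**})}$, the one step that genuinely exploits the interplay of $\ell_p^w$ and $\ell_p^{mid}$ under the canonical embeddings rather than a purely formal rearrangement. Once all implications are assembled with matching constants, the concluding chain $w_p^{mid}(T)=\|\hat{T}\|=\inf\{C:\eqref{eq:9}\}=\inf\{C:\eqref{eq:10}\}=\inf\{C:\eqref{eq:11}\}=\inf\{C:\eqref{eq:12}\}$ follows, since each equivalence was shown to preserve the optimal constant.
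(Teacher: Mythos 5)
Your proof is correct, but it follows a genuinely different route from the paper's. The paper disposes of the proposition in two lines: it deduces $(a)\Leftrightarrow(b)\Leftrightarrow(c)\Leftrightarrow(d)$ from its Theorem \ref{Wp and alphap} (the duality $(X\hat{\otimes}_{\alpha_p}Y)^*\cong W_p^{mid}(X,Y^*)$), which in turn rests on the Karn--Sinha characterization (Theorem \ref{thm:Wp and Pip}), the Chevet--Saphar duality \eqref{eq:32} and Lemma \ref{lpmid dual for Lp spaces}; and it gets $(d)\Leftrightarrow(e)\Leftrightarrow(f)$ from the definition of $\|\cdot\|_p^{mid}$, exactly as you do. You, by contrast, prove the substantive equivalence $(b)\Leftrightarrow(c)$ directly at the sequence level: a H\"older-type pairing through the canonical embedding $J:Y\to Y^{**}$ (using that $\|(JTx_j)_j\|_{p}^{mid}\leq\|(Tx_j)_j\|_p^{mid}$, via restriction of test functionals along $J^*$) for one direction, and an explicit construction $f_j=\sum_m c_{m,j}g_m$ of near-optimal functionals for the other, with the key verification $\|(f_j)_j\|_{p,mid}^{dual}\leq 1$ carried out against $B_{\ell_p^{mid}(Y^{**})}$. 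This buys two things: the argument is self-contained (no tensor norms, no factorization through $\Pi_p$ and $\ell_p$), and it makes explicit a point the paper's citation silently elides --- Theorem \ref{Wp and alphap} is stated for operators into \emph{dual} spaces, so applying it to $T:X\to Y$ with arbitrary $Y$ really requires the norm identity $\|(JTx_j)_j\|_{p,Y^{**}}^{mid}=\|(Tx_j)_j\|_{p,Y}^{mid}$ that you establish. The paper's approach buys brevity and reuses machinery it needs anyway. One small repair to your construction: for $p=1$ (so $p^*=\infty$) the sign choices $c_{m,j}$ do not decay, and the series $\sum_m c_{m,j}g_m$ need not converge in norm in $Y^*$ (only weak$^*$), so the identity $\varphi_j(f_j)=\sum_m c_{m,j}\varphi_j(g_m)$ for $\varphi_j\in Y^{**}$ could fail; this is fixed by taking $(c_{m,j})$ finitely supported and realising the $\ell_p$-norm of the array only up to $\epsilon$, which works uniformly for all $1\leq p<\infty$ and leaves the constants unchanged.
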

\begin{proof}
 $(a)\iff (b)\iff (c)\iff(d)$ is clear from Theorem \ref{Wp and alphap}.
 
 $(d) \iff (e) \iff (f)$ follows from the definition of $\|\cdot\|_p^{mid}$-norm.

\end{proof}
Let us now recall the following notion of the dual of an operator  ideal from \cite{DiJaTo}. 

\begin{Definition}
	Let $[\mathcal{U}, \|\cdot\|_{\mathcal{U}}]$ be a Banach operator ideal. Then the components of the dual ideal of $\mathcal{U}$ is  given by,
	\begin{equation}\label{eq:13}
		\mathcal{U}^{dual}(X,Y)=\{T:X\rightarrow Y: T^{*}\in \mathcal{U}(Y^{*},X^{*})\}.
	\end{equation}
	Moreover, $[\mathcal{U}^{dual}, \|\cdot\|_{\mathcal{U}}^{dual}]$ is a Banach operator ideal, where $\|T\|_{\mathcal{U}}^{dual}= \|T^{*}\|_{\mathcal{U}}$.
\end{Definition}

In \cite{Botelho1}, using the spherical completeness property of sequence classes, the authors have developed a unified approach to characterize the adjoints of operators defined by the transformation of certain kind of summable sequences. 
In particular, one can obtain a description of the dual ideal of $W_{p}^{mid}$  using the spherical completeness property of $\ell_{p}^{w}(\cdot)$ and $\ell_{p}^{mid}(\cdot)$. Here, we  give an alternate proof for the same using the tensor norm $\alpha_{p}$.

\begin{pro}
	Let  $1\leq p <\infty$ and $T:X\rightarrow Y$ be a continuous linear operator. Then $\hat{T}:(\ell_{p}^{mid})^{dual}\left(X\right) \rightarrow \ell_{p^{*}}\left<Y\right>$ is well defined and continuous if and only if its adjoint   $T^{*}\in W_{p}^{mid}(Y^*, X^*)$. Furthermore, $w_p^{mid}(T^*)=\|\hat{T}\|$.
\end{pro}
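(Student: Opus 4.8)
The plan is to translate the continuity of $\hat{T}$ into a single bilinear inequality and to recognise that same inequality as the condition describing $T^{*}\in W_{p}^{mid}(Y^{*},X^{*})$ through Theorem \ref{Wp and alphap} applied to the adjoint.

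First I would unwind the target norm. By the definition of the Cohen norm and $(p^{*})^{*}=p$, for any $(y_{j})_{j}\in Y^{\mathbb{N}}$ one has
\[
\|(y_{j})_{j}\|_{C,p^{*}}=\sup_{(y_{j}^{*})_{j}\in B_{\ell_{p}^{w}(Y^{*})}}\sum_{j}|y_{j}^{*}(y_{j})|.
\]
Consequently, $\hat{T}:(\ell_{p}^{mid})^{dual}(X)\rightarrow \ell_{p^{*}}\langle Y\rangle$ is well defined and continuous with $\|\hat{T}\|\leq C$ precisely when
\[
\sum_{j}|y_{j}^{*}(Tx_{j})|\leq C\,\|(x_{j})_{j}\|_{p,mid}^{dual}\,\|(y_{j}^{*})_{j}\|_{p}^{w}
\]
holds for all finite families $(x_{j})\subset X$ and $(y_{j}^{*})\subset Y^{*}$. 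The reduction to finite families is harmless: the finite bounds applied to truncations of a sequence $(x_{j})_{j}\in(\ell_{p}^{mid})^{dual}(X)$ force $(Tx_{j})_{j}\in \ell_{p^{*}}\langle Y\rangle$ together with the norm control $\|(Tx_{j})_{j}\|_{C,p^{*}}\leq C\|(x_{j})_{j}\|_{p,mid}^{dual}$, and conversely continuity gives back the finite inequality.

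Next I would identify the right-hand constant with $w_{p}^{mid}(T^{*})$. Applying Theorem \ref{Wp and alphap} with the roles of its spaces $X,Y$ played by $Y^{*},X$ (so that the codomain $X^{*}$ is a dual space), $T^{*}\in W_{p}^{mid}(Y^{*},X^{*})$ if and only if $T^{*}$ is represented by a functional $\Psi\in(Y^{*}\hat{\otimes}_{\alpha_{p}}X)^{*}$ with $\Psi(y^{*}\otimes x)=\langle T^{*}y^{*},x\rangle=\langle y^{*},Tx\rangle$, and then $w_{p}^{mid}(T^{*})=\|\Psi\|$. Since $\alpha_{p}$ on $Y^{*}\otimes X$ is the infimum of $\|(y_{j}^{*})\|_{p}^{w}\|(x_{j})\|_{p,mid}^{dual}$ over representations, the condition $\|\Psi\|\leq C$ reads
\[
\Bigl|\sum_{j}\langle y_{j}^{*},Tx_{j}\rangle\Bigr|\leq C\,\|(y_{j}^{*})\|_{p}^{w}\,\|(x_{j})\|_{p,mid}^{dual}
\]
for all finite families. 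This differs from the inequality of the previous paragraph only in the replacement of $|\sum_{j}y_{j}^{*}(Tx_{j})|$ by $\sum_{j}|y_{j}^{*}(Tx_{j})|$; I would close this gap by the phase trick. Multiplying each $y_{j}^{*}$ by a unimodular scalar leaves $\|(y_{j}^{*})\|_{p}^{w}$ unchanged, and a suitable such choice aligns the phases so that $\sum_{j}\langle y_{j}^{*},Tx_{j}\rangle=\sum_{j}|y_{j}^{*}(Tx_{j})|$; hence the two suprema over the respective balls coincide.

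Combining the two paragraphs, the admissible constants $C$ for the continuity of $\hat{T}$ and those for $\|\Psi\|\leq C$ form the same set, which simultaneously yields the equivalence and the norm equality $w_{p}^{mid}(T^{*})=\|\hat{T}\|$. I expect no essential obstacle here: the only points requiring care are the correct assignment of roles in Theorem \ref{Wp and alphap} and the phase-adjustment observation, the remainder being a direct rewriting through the Cohen norm.
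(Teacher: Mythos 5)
Your proposal is correct and follows essentially the same route as the paper: both directions pass through Theorem \ref{Wp and alphap} applied to the functional $\Psi(y^{*}\otimes x)=\langle y^{*},Tx\rangle$ on $Y^{*}\hat{\otimes}_{\alpha_{p}}X$, and your phase-alignment trick (unimodular scalars leave $\|\cdot\|_{p}^{w}$ invariant) is precisely what the paper invokes as the spherical completeness property of $\ell_{p}^{w}(\cdot)$ from \cite{Botelho1}. Your reformulation as an equality of sets of admissible constants, together with the explicit reduction to finite families via the Cohen norm, is only an organizational difference, not a different argument.
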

\begin{proof}
	Let $T\in \mathcal{L}(X,Y)$ be such that $\hat{T}:(\ell_{p}^{mid})^{dual}(X)\rightarrow \ell_{p^{*}}\left<Y\right>$ is well defined and continuous. Define $\Phi_{T^{*}}: Y^{*}\otimes X\rightarrow \mathbb{K}$ as  $\Phi_{T^{*}}(f\otimes x)= \left<T^{*}f,x\right>$ for all $f\in Y^*$ and $x\in X.$ Then $\Phi_{T^*}$ is linear and satisfies the inequality
	\begin{align*}
		\left|\Phi_{T^{*}}\left(u\right)\right|&= \left|\sum_{j=1}^{n}\left<T^{*}f_{j},x_{j}\right>\right|=\left|\sum_{j=1}^{n}\left<f_{j},Tx_{j}\right>\right|\\
		&\leq \|\hat{T}\|\|\left(f_{j}\right)_{j}\|_{p}^{w} \|\left(x_{j}\right)_{j}\|_{p,mid}^{dual}
				\end{align*}
			for all representations of $u=\sum\limits_{j=1}^{n}f_{j}\otimes x_{j}\in Y^{*}\otimes X$. Thus $\left|\Phi_{T^{*}}\left(u\right)\right| \leq \|\hat{T}\|\alpha_p(u)$ for each $u\in Y^{*}\otimes X$
and hence $\Phi_{T^*}\in (Y^{*}\otimes X)^*$ with $\|\Phi_{T^*}\|\leq \|\hat{T}\|$ or equivalently, $T^{*}\in W_{p}^{mid}(Y^*, X^*)$ with
\begin{equation}\label{eq:2.11}
w_p^{mid}(T^*)\leq \|\hat{T}\|
\end{equation}
	by using Theorem \ref{Wp and alphap}.
	
	 Conversely, let $T^{*}\in W_{p}^{mid}(Y^*, X^*)$. Then by Theorem \ref{Wp and alphap},  $\Phi:Y^{*}\hat{\otimes}_{\alpha_{p}}X\rightarrow \mathbb{K}$ defined as $\Phi(f\otimes x)= \left<T^{*}f,x\right>$ is a continuous linear functional with $\|\Phi\|=w_p^{mid}(T^*)$. Therefore using the spherical completeness property of $\ell_{p}^{w}(\cdot)$ (see \cite[Lemma 2.2]{Botelho1}), we get
	\begin{align*}
		\sum_{j=1}^{n}\left|f_{j}(Tx_{j})\right|&= \left|\sum_{j=1}^{n}f_{j}(Tx_{j})\right| =\left|\Phi\left(\sum_{j=1}^{n}f_{j}\otimes x_{j}\right)\right|\\
		&\leq w_p^{mid}(T^*) \|\left(f_{j}\right)_{j}\|_{p}^{w} \|\left(x_{j}\right)_{j}\|_{p,mid}^{dual}.
	\end{align*}
This proves that $\hat{T}:(\ell_{p}^{mid})^{dual}(X)\rightarrow \ell_{p^{*}}\left<Y\right>$ is continuous with $\|\hat{T}\|\leq w_p^{mid}(T^*)$ and hence $\|\hat{T}\|=w_p^{mid}(T^*)$ by using \eqref{eq:2.11}. 
 
\end{proof}
\begin{cor} Let $X$, $Y$ be Banach spaces and $1\leq p<\infty$. Then the component $(W_{p}^{mid})^{dual}(X,Y)$  of the dual ideal  $(W_{p}^{mid})^{dual}$ is  given by,
	  $$(W_{p}^{mid})^{dual}(X,Y)=\{T\in \mathcal{L}(X,Y): \hat{T}: (\ell_{p}^{mid})^{dual}(X)\rightarrow \ell_{p^*}\left<Y\right> \text{is well defined and continuous}\}.$$ 
Moreover, $(w_{p}^{mid})^{dual}(T)=\|\hat{T}\|$ for all $T\in (W_{p}^{mid})^{dual}(X,Y)$.
\end{cor}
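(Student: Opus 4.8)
The plan is to read off the statement directly from the preceding Proposition together with the definition of the dual operator ideal in \eqref{eq:13}; essentially no new argument is needed beyond unwinding those two ingredients, since the analytic content has already been carried out (via the tensor norm $\alpha_p$ and the spherical completeness of $\ell_p^w(\cdot)$) in that Proposition.

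First I would specialize the definition of the dual ideal \eqref{eq:13} to $\mathcal{U}=W_p^{mid}$. By that definition, a continuous operator $T:X\rightarrow Y$ lies in $(W_p^{mid})^{dual}(X,Y)$ if and only if its adjoint $T^{*}:Y^{*}\rightarrow X^{*}$ belongs to $W_p^{mid}(Y^{*},X^{*})$, and in that case the dual-ideal norm is $(w_p^{mid})^{dual}(T)=w_p^{mid}(T^{*})$. Next I would invoke the preceding Proposition, which characterizes precisely this membership in terms of a sequence-space transformation: for $T\in\mathcal{L}(X,Y)$ one has $T^{*}\in W_p^{mid}(Y^{*},X^{*})$ if and only if $\hat{T}:(\ell_p^{mid})^{dual}(X)\rightarrow \ell_{p^{*}}\langle Y\rangle$ is well defined and continuous, with the norm identity $w_p^{mid}(T^{*})=\|\hat{T}\|$. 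Substituting this equivalence into the set description from the first step yields the claimed formula for $(W_p^{mid})^{dual}(X,Y)$, while chaining the two norm identities gives $(w_p^{mid})^{dual}(T)=w_p^{mid}(T^{*})=\|\hat{T}\|$.

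There is no genuine obstacle in this corollary; the only matter to verify is the bookkeeping of domains and codomains when passing from the adjoint $T^{*}$ (acting between $Y^{*}$ and $X^{*}$) to the induced map $\hat{T}$ built from $T$ itself (acting between $(\ell_p^{mid})^{dual}(X)$ and $\ell_{p^{*}}\langle Y\rangle$). This is exactly the alignment supplied by the Proposition, whose hypotheses place $T$ on $X\to Y$ and conclude about $T^{*}\in W_p^{mid}(Y^{*},X^{*})$, matching the dual-ideal condition verbatim. Hence the corollary is an immediate repackaging of the Proposition through \eqref{eq:13}, and I would present it as such rather than repeating any estimate.
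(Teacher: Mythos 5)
Your proposal is correct and matches the paper's intent exactly: the paper states this corollary without proof, precisely because it is the immediate combination of the dual-ideal definition \eqref{eq:13} with the preceding Proposition (which gives $T^{*}\in W_{p}^{mid}(Y^{*},X^{*})$ if and only if $\hat{T}:(\ell_{p}^{mid})^{dual}(X)\rightarrow \ell_{p^{*}}\left<Y\right>$ is well defined and continuous, together with $w_{p}^{mid}(T^{*})=\|\hat{T}\|$). Your chaining of the two norm identities to get $(w_{p}^{mid})^{dual}(T)=w_{p}^{mid}(T^{*})=\|\hat{T}\|$ is exactly the intended argument.
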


	\section{Absolutely mid $p$-summing operators}
In this section we study the class of absolutely mid $p$-summing operators and obtain a tensor norm associated to this operator ideal.

For $1\leq p\leq\infty$ and Banach spaces $X,Y$, we define \begin{equation}\label{eq:15}
	\gamma_{p}(u) = \inf\left\{\|(x_{j})_{j=1}^{n}\|_{p^{*}}^{mid}\|(y_{j})_{j=1}^{n}\|_{p}:u=\sum_{j=1}^{n}x_{j}\otimes y_{j}\right\}.
\end{equation} 
\begin{pro}\label{dp and gammap}
	Let $X$ and $Y$ be Banach spaces, we have 
	\begin{enumerate}[(i)]
		\item $d_{p}\leq \gamma_{p}$ for each $1\leq p\leq\infty$.
		\item $\gamma_{1}=d_{1}=g_{1}=\pi$.
	\end{enumerate}
\end{pro}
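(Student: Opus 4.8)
For part (i), the plan is to exploit the norm-one inclusion $\ell_{p^{*}}^{mid}(X)\hookrightarrow\ell_{p^{*}}^{w}(X)$ recorded in \eqref{eq:3}, which gives $\|(x_{j})_{j=1}^{n}\|_{p^{*}}^{w}\le\|(x_{j})_{j=1}^{n}\|_{p^{*}}^{mid}$ for every finite sequence in $X$ (one checks this directly by testing the $\ell_{p^{*}}^{w}$-supremum against a single functional $x^{*}\in B_{X^{*}}$ viewed as the element $x^{*}\cdot e_{1}\in B_{\ell_{p^{*}}^{w}(X^{*})}$). Fixing an arbitrary representation $u=\sum_{j=1}^{n}x_{j}\otimes y_{j}$ and multiplying by $\|(y_{j})_{j=1}^{n}\|_{p}$ yields $\|(x_{j})_{j}\|_{p^{*}}^{w}\|(y_{j})_{j}\|_{p}\le\|(x_{j})_{j}\|_{p^{*}}^{mid}\|(y_{j})_{j}\|_{p}$. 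The left-hand side is admissible in the infimum defining $d_{p}(u)$, so $d_{p}(u)\le\|(x_{j})_{j}\|_{p^{*}}^{mid}\|(y_{j})_{j}\|_{p}$; taking the infimum over all representations on the right gives $d_{p}\le\gamma_{p}$.

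For part (ii), I would first settle $\gamma_{1}=d_{1}$. Setting $p=1$ (so $p^{*}=\infty$), the key point is that $\ell_{\infty}^{mid}(X)=\ell_{\infty}(X)=\ell_{\infty}^{w}(X)$ holds \emph{isometrically}, as noted after \eqref{eq:3}: a direct computation gives $\|(x_{j})_{j}\|_{\infty}^{w}=\sup_{j}\|x_{j}\|$, and this quantity is squeezed between $\|\cdot\|_{\infty}^{w}$ and $\|\cdot\|_{\infty}$ by the norm monotonicity underlying \eqref{eq:3}, forcing $\|(x_{j})_{j}\|_{\infty}^{mid}=\|(x_{j})_{j}\|_{\infty}^{w}$ for every finite sequence. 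Consequently the two infima defining $\gamma_{1}(u)$ and $d_{1}(u)$ are taken over identical expressions, whence $\gamma_{1}=d_{1}$.

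It then remains to identify $d_{1}$ and $g_{1}$ with $\pi$. For any representation $u=\sum_{j}x_{j}\otimes y_{j}$ one has $\sum_{j}\|x_{j}\|\|y_{j}\|\le(\sup_{j}\|x_{j}\|)(\sum_{j}\|y_{j}\|)$, and taking infima gives $\pi\le d_{1}$. For the reverse I would rescale each nonzero term, replacing $x_{j}\otimes y_{j}$ by $(x_{j}/\|x_{j}\|)\otimes(\|x_{j}\|\,y_{j})$: this leaves $u$ unchanged, the rescaled $X$-sequence has supremum norm $1$, and the rescaled $Y$-sequence has $\ell_{1}$-norm $\sum_{j}\|x_{j}\|\|y_{j}\|$, so $d_{1}(u)\le\sum_{j}\|x_{j}\|\|y_{j}\|$ and hence $d_{1}\le\pi$. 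The same two estimates with the factors interchanged give $g_{1}=\pi$ (equivalently, one invokes that $g_{1}$ is the transpose of $d_{1}$ and that $\pi$ is symmetric). Combining these with $\gamma_{1}=d_{1}$ completes the proof. I do not anticipate a genuine obstacle; the only point demanding care is the \emph{isometric} (not merely set-theoretic) identity $\|\cdot\|_{\infty}^{mid}=\|\cdot\|_{\infty}^{w}$, since part (ii) requires matching the two infima exactly rather than up to a multiplicative constant.
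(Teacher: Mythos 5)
Your proof is correct and takes essentially the same route as the paper: part (i) via the norm comparison $\|\cdot\|_{p^{*}}^{w}\leq\|\cdot\|_{p^{*}}^{mid}$ coming from the norm-one inclusion $\ell_{p^{*}}^{mid}(\cdot)\hookrightarrow\ell_{p^{*}}^{w}(\cdot)$, and part (ii) via the isometric identification $\ell_{\infty}^{mid}(\cdot)=\ell_{\infty}(\cdot)=\ell_{\infty}^{w}(\cdot)$ together with $d_{1}=g_{1}=\pi$. The only difference is that where the paper simply cites \cite[Proposition 6.6]{ryan} for $d_{1}=g_{1}=\pi$, you reprove that classical fact inline by the standard rescaling argument, which is fine.
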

\begin{proof}
(i) follows by using the inequality	$\|\cdot\|_{p}^{w}\leq \|\cdot\|_{p}^{mid}$ and, (ii) can be proved easily using the identity $\ell_{\infty}^{mid}(\cdot)=\ell_{\infty}(\cdot)$ and \cite[Proposition 6.6]{ryan}.
\end{proof}
\begin{thm}\label{tensor norm gammap}
For $1\leq p\leq\infty, \gamma_{p}$ is a finitely generated tensor norm on $X\otimes Y$.
\end{thm}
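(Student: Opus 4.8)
The plan is to verify the three defining properties in turn—that $\gamma_{p}$ is a reasonable crossnorm, that it is finitely generated, and that it is uniform—since together these give that $\gamma_{p}$ is a finitely generated tensor norm. The whole argument runs parallel to the treatment of $\alpha_{p}$ in Proposition~\ref{thm:crossnorm alphap} and Theorem~\ref{tensor norm alphap}, with $(\ell_{p}^{mid})^{dual}(\cdot)$ replaced by $\ell_{p^{*}}^{mid}(\cdot)$ and the two tensor factors interchanged.

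First I would show $\gamma_{p}$ is a reasonable crossnorm. Positive homogeneity is immediate from the definition. For subadditivity I would repeat the concatenation argument of Proposition~\ref{thm:crossnorm alphap} verbatim: given $u_{1},u_{2}$ and $\epsilon>0$, choose representations normalised so that $\|(x_{ij})_{j}\|_{p^{*}}^{mid}\le(\gamma_{p}(u_{i})+\epsilon)^{1/p^{*}}$ and $\|(y_{ij})_{j}\|_{p}\le(\gamma_{p}(u_{i})+\epsilon)^{1/p}$—a rescaling permitted because the defining product is invariant under $x\mapsto\lambda x,\ y\mapsto\lambda^{-1}y$—then concatenate and bound the two factor norms of the concatenation by $(\gamma_{p}(u_{1})+\gamma_{p}(u_{2})+2\epsilon)^{1/p^{*}}$ and $(\gamma_{p}(u_{1})+\gamma_{p}(u_{2})+2\epsilon)^{1/p}$. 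Evaluating a single-term representation and using the sequence class axiom $\|x\cdot e_{1}\|_{S(X)}=\|x\|$ gives $\gamma_{p}(x\otimes y)\le\|x\|\|y\|$, while $d_{p}\le\gamma_{p}$ from Proposition~\ref{dp and gammap}(i) together with $\varepsilon\le d_{p}$ forces $\|x\|\|y\|=\varepsilon(x\otimes y)\le\gamma_{p}(x\otimes y)$; hence $\gamma_{p}(x\otimes y)=\|x\|\|y\|$, and subadditivity upgrades this to $\gamma_{p}\le\pi$. Combined with $\varepsilon\le d_{p}\le\gamma_{p}$, this gives $\varepsilon\le\gamma_{p}\le\pi$, so $\gamma_{p}$ is a reasonable crossnorm.

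That $\gamma_{p}$ is finitely generated is essentially immediate from its definition through finite representations: any $u=\sum_{j=1}^{n}x_{j}\otimes y_{j}$ lies in $M\otimes N$ with $M=\mathrm{span}(x_{j})$ and $N=\mathrm{span}(y_{j})$ finite-dimensional. The only point to check is that neither factor norm changes upon passing to such subspaces; for $\|\cdot\|_{p}$ this is clear since it depends only on the scalars $\|y_{j}\|$, and for the mid-norm it follows from Hahn--Banach, because the restriction $X^{*}\to M^{*}$ carries $B_{X^{*}}$ onto $B_{M^{*}}$, so the supremum over $B_{\ell_{p^{*}}^{w}(M^{*})}$ defining $\|(x_{j})\|_{p^{*}}^{mid}$ in $M$ agrees with the one computed in $X$.

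The remaining, and main, step is uniformity, which reduces to the linear stability of the two sequence classes. For bounded $S\colon X\to Y$ the estimate $\|(Sy_{j})\|_{p}\le\|S\|\,\|(y_{j})\|_{p}$ is trivial; the crucial inequality is $\|(Sx_{j})\|_{p^{*}}^{mid}\le\|S\|\,\|(x_{j})\|_{p^{*}}^{mid}$, which I would obtain by writing the mid-norm through adjoints: for $(\phi_{n})\in B_{\ell_{p^{*}}^{w}(Y^{*})}$ one has $(S^{*}\phi_{n})_{n}\in\ell_{p^{*}}^{w}(X^{*})$ with $\|(S^{*}\phi_{n})_{n}\|_{p^{*}}^{w}\le\|S\|$, whence $\sum_{n}\sum_{j}|\phi_{n}(Sx_{j})|^{p^{*}}=\sum_{n}\sum_{j}|(S^{*}\phi_{n})(x_{j})|^{p^{*}}\le\|S\|^{p^{*}}(\|(x_{j})\|_{p^{*}}^{mid})^{p^{*}}$. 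Applying these two bounds to $S_{1}\otimes S_{2}$ on an arbitrary representation of $u$ and taking the infimum yields $\gamma_{p}((S_{1}\otimes S_{2})u)\le\|S_{1}\|\,\|S_{2}\|\,\gamma_{p}(u)$, which is the uniform property. I expect this verification of mid-norm linear stability to be the only genuinely computational point; everything else transfers directly from the $\alpha_{p}$ arguments.
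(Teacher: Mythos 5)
Your overall route is the same as the paper's: the paper proves this theorem simply by declaring it analogous to Theorem~\ref{tensor norm alphap}, whose proof consists of the crossnorm argument of Proposition~\ref{thm:crossnorm alphap}, the assertion that finite generation is clear from the definition, and uniformity via linear stability of the two sequence classes. Your crossnorm argument (concatenation, $\gamma_p(x\otimes y)=\|x\|\|y\|$, $\varepsilon\le d_p\le\gamma_p\le\pi$) and your uniformity argument are correct; in particular the adjoint computation $\|(Sx_j)_j\|_{p^*}^{mid}\le\|S\|\,\|(x_j)_j\|_{p^*}^{mid}$ is exactly the ``linear stability'' the paper invokes without writing out.

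The gap is in your justification of finite generation. You claim that for $M=\mathrm{span}(x_j)$ the norm $\|(x_j)_j\|_{p^*}^{mid}$ computed in $M$ agrees with the one computed in $X$ ``because the restriction $X^*\to M^*$ carries $B_{X^*}$ onto $B_{M^*}$.'' This is a non sequitur: the supremum defining the mid norm runs over $B_{\ell_{p^*}^{w}(M^*)}$, and extending each functional $\phi_n$ of a sequence $(\phi_n)_n\in B_{\ell_{p^*}^{w}(M^*)}$ by Hahn--Banach gives no control on the weak-$p^*$ norm of the extended sequence, since that norm is a supremum over $B_X$ rather than $B_M$ (sequence lifting is an operator-lifting problem $\ell_p\to M^*$ through the quotient $X^*\to M^*$, which Hahn--Banach does not solve). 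Restriction only yields $\|(x_j)_j\|_{p^*,X}^{mid}\le\|(x_j)_j\|_{p^*,M}^{mid}$, which is the direction you do \emph{not} need. In fact the equality you assert is false: take $p^*=2$, let $e_1,\dots,e_n$ be the unit vectors of $M=\ell_2^n$, and embed $M$ isometrically into $X=C[0,1]$. The inclusion $M\to\ell_2$ corresponds to a sequence in $B_{\ell_2^{w}(M^*)}$ witnessing $\|(e_j)_{j=1}^n\|_{2,M}^{mid}=\sqrt{n}$, whereas every norm-one operator $T:C[0,1]\to\ell_2$ is $2$-summing with $\pi_2(T)\le K_G$ (Grothendieck's theorem, $K_G$ the Grothendieck constant), so $\|(e_j)_{j=1}^n\|_{2,X}^{mid}\le K_G\|(e_j)_{j=1}^n\|_2^{w}=K_G$ for all $n$. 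So the mid norm is genuinely ambient-dependent---this dependence is the whole point of mid summability (compare Lemma~\ref{lpmid dual for Lp spaces})---and the recipe ``pass to the span of a near-optimal representation'' does not prove $\inf_{M,N}\gamma_{p,M,N}(u)\le\gamma_{p,X,Y}(u)$. To be fair, the paper offers no argument for this step either; but your proposal replaces the paper's unproven assertion by a specific claim that is demonstrably false. Closing the gap requires a real argument (e.g., that the mid norm of a fixed finite sequence decreases to its ambient value along the net of finite-dimensional superspaces), or a change of definition: replace $\gamma_p$ by its finitely generated hull, which has the same dual on every pair of spaces, so that Theorem~\ref{gammap and Pipmid} and its corollaries are unaffected.
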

\begin{proof}
	The proof is analogous to Theorem \ref{tensor norm alphap}.
\end{proof}
The transpose of $\gamma_{p}$ is defined as follows:
\begin{equation}\label{eq:16}
	\delta_{p}(u)= \inf\{\|(x_{j})_{j=1}^{n}\|_{p}\|(y_{j})_{j=1}^{n}\|_{p^{*}}^{mid}:u=\sum_{j=1}^{n}x_{j}\otimes y_{j}\}.
\end{equation}
It can be easily checked that  $\delta_{p}$ is a tensor norm for $p\in [1,\infty]$.

Next, we prove 
\begin{pro}\label{Projective norm}
	The tensor norm $\gamma_{p}$ is right projective  and $\delta_{p}$ is left projective for every $p\in [1,\infty)$.
\end{pro}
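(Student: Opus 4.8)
The plan is to recall what it means for a tensor norm to be right projective and reduce the claim to a quotient-lifting statement about the defining sequence norms. A tensor norm $\beta$ is right projective if for every quotient map $Q:Y\twoheadrightarrow Y/N$ (i.e.\ a metric surjection), the induced map $I_X\otimes Q:X\otimes_\beta Y\to X\otimes_\beta(Y/N)$ is again a metric surjection. So I would fix a metric surjection $Q:Y\to Z$ and a tensor $w=\sum_{j=1}^n x_j\otimes z_j\in X\otimes Z$, and I would want to show that $\gamma_p$ on $X\otimes Z$ is computed by lifting the $z_j$ back through $Q$.

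First I would unwind the definition \eqref{eq:15}. Given any representation $w=\sum_{j=1}^n x_j\otimes z_j$, I would use that $Q$ is a metric surjection to lift each $z_j$ to some $y_j\in Y$ with $Q y_j=z_j$ and $\|y_j\|\le \|z_j\|+\epsilon'$; then $v=\sum_j x_j\otimes y_j$ satisfies $(I_X\otimes Q)(v)=w$. The factor coming from $X$, namely $\|(x_j)_{j=1}^n\|_{p^*}^{mid}$, is untouched by the lifting, while the $\ell_p$-factor of the lifts $(y_j)_j$ can be made as close to $\|(z_j)_j\|_p$ as desired by choosing the $\epsilon'$ small. Taking products and then the infimum over representations gives $\gamma_p(v)\le \gamma_p(w)+\epsilon$, which yields the nontrivial surjectivity-with-norm-control inequality. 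The reverse inequality $\gamma_p(w)\le\gamma_p(v)$ for any $v$ with $(I_X\otimes Q)(v)=w$ is immediate from the uniform (metric mapping) property of the tensor norm $\gamma_p$ established in Theorem \ref{tensor norm gammap}, since $\|I_X\|\,\|Q\|\le 1$. The statement for $\delta_p$ being left projective follows by the same argument with the roles of the two factors exchanged, using that $\delta_p$ is the transpose of $\gamma_p$ (equation \eqref{eq:16}); concretely, left projectivity of $\delta_p$ is exactly right projectivity of its transpose $\gamma_p$, so once the $\gamma_p$ case is done the $\delta_p$ case is a transposition.

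The key point making this work is that the side of $\gamma_p$ carrying the quotient map $Q$ is the $\ell_p(\cdot)$-side, and the $\ell_p$-norm behaves well under the metric lifting: a genuine norm $\|\cdot\|_p$ on the values is what lets me lift $z_j$ to $y_j$ with almost no increase in the $\ell_p$-quantity. By contrast, the $\ell_{p^*}^{mid}(\cdot)$-factor sits on the $X$-side, which is held fixed, so I never need to control how $\|\cdot\|_{p^*}^{mid}$ interacts with $Q$. I expect the main obstacle to be purely bookkeeping: arranging the two approximation parameters (the $\epsilon'$ in the norm of each lift and the overall $\epsilon$ in the infimum over representations) so that the product $\|(x_j)_j\|_{p^*}^{mid}\|(y_j)_j\|_p$ stays below $\gamma_p(w)+\epsilon$ after passing to the limit, and making sure the lifting can be performed uniformly over a near-optimal representation rather than just one fixed representation. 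Once the metric-surjection definition is in hand, the argument is a standard lifting computation and no delicate interaction between $\ell_{p^*}^{mid}$ and the quotient is required.
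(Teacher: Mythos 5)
Your proposal is correct and follows essentially the same route as the paper: push representations forward through $Q$ (equivalently, invoke the uniformity of $\gamma_{p}$) for the norm-one inequality, and lift a near-optimal representation elementwise through the metric surjection with an $\epsilon$-controlled increase in the $\ell_{p}$-factor, keeping the $\|\cdot\|_{p^{*}}^{mid}$-factor untouched, for the surjectivity-with-norm-control inequality. The paper likewise disposes of $\delta_{p}$ by the symmetric (transposed) argument, so there is nothing substantive to add.
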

\begin{proof}
	We will only prove that $\gamma_{p}$ is right projective, the proof of $\delta_{p}$ is left projective follows analogously. Let $X,Y,Z$ be Banach spaces and $Q:Z\rightarrow Y$ be a quotient operator .	We need to prove that $I_X\otimes Q: X\otimes_{\gamma_{p}}Z\rightarrow X\otimes_{\gamma_{p}}Y$ is a quotient operator, where  $I_X$ is the identity operator on $X$. Consider $u=\sum\limits_{j=1}^{n}x_{j}\otimes z_{j} \in X\otimes_{\gamma_{p}}Z$ with $\gamma_{p}(u)\leq 1$. Then, 
	\begin{align*}
	\gamma_{p}(I_X\otimes Q(u)) &= \gamma_{p}(\sum_{j=1}^{n}x_{j}\otimes Qz_{j}) \\
	&\leq \|(x_{j})_{j=1}^{n}\|_{p^{*}}^{mid}\|(Qz_{j})_{j=1}^{n}\|_{p}\\
	&\leq \|(x_{j})_{j=1}^{n}\|_{p^{*}}^{mid}\|(z_{j})_{j=1}^{n}\|_{p} \textnormal{ ~~since $\|Q\|=1$}\\
	&\leq	\gamma_{p}(u)
	\end{align*} 
	after taking infimum over all representations of $u$.
	
	Let $v\in B_{X\otimes_{\gamma_{p}}Y}$. Then $\gamma_p(v)\leq 1$ and we can choose a representation of $v=\sum\limits_{j=1}^{n}x_{j}\otimes y_{j}$ with  $\|(x_{j})_{j=1}^{n}\|_{p^{*}}^{mid}\|(y_{j})_{j=1}^{n}\|_{p}<1$. Since $Q$ is a quotient operator, there exist $\epsilon>0$ and $z_{j}\in Z$ such that $Qz_{j}=y_{j}$ and $\|z_{j}\|\leq (1+\epsilon)\|y_{j}\|$ for each $1\leq j\leq n$. Now, define $u= \sum\limits_{j=1}^{n}x_{j}\otimes z_{j} \in X\otimes Z$. Note that $I\otimes Q(u)=v$ and
	$
	\gamma_{p}(u)\leq (1+\epsilon) \|(x_{j})_{j=1}^{n}\|_{p^{*}}^{mid}\|(y_{j})_{j=1}^{n}\|_{p}.
$
	Since $\epsilon$ is arbitrarily chosen, $	\gamma_{p}(u)\leq 1$. Consequently, $\|I\otimes Q\|=1$ and hence $\gamma_{p}$ is a right projective tensor norm.

\end{proof}
Recall from \cite{ryan} that the Chevet-Saphar tensor norms satisfy the inequality $d_{p}(u) \leq \|u\|_{p}\leq g_{p}(u)$  for each $u\in \ell_{p}\otimes X$ for any Banach space $X$.  Further, including the tensor norms $\gamma_{p}$ and $\delta_{p}$, we obtain the following result:

\begin{pro}\label{norm p and gammap}
 For any $1\leq p<\infty$ and Banach space $X$, the following inequality holds for each $u\in \ell_{p}\otimes X$:
 \begin{equation}\label{eq:17}
 	\gamma_{p}(u)\leq \|u\|_{p}\leq \delta_{p}(u).
 \end{equation}
\end{pro}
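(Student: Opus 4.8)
The plan is to prove the two inequalities in \eqref{eq:17} separately, reducing each to the recalled Chevet--Saphar bracketing $d_p(u)\le\|u\|_p\le g_p(u)$ together with the pointwise comparison $\|\cdot\|_{p^*}^{w}\le\|\cdot\|_{p^*}^{mid}$ furnished by the inclusion $\ell_{p^*}^{mid}(\cdot)\subseteq\ell_{p^*}^{w}(\cdot)$ of \eqref{eq:3}. The right-hand inequality $\|u\|_p\le\delta_p(u)$ is the routine half. Given any representation $u=\sum_{j=1}^n x_j\otimes y_j$ with $x_j\in\ell_p$ and $y_j\in X$, the comparison above yields $\|(x_j)_{j=1}^n\|_p\,\|(y_j)_{j=1}^n\|_{p^*}^{w}\le\|(x_j)_{j=1}^n\|_p\,\|(y_j)_{j=1}^n\|_{p^*}^{mid}$, and taking the infimum over all representations gives $g_p(u)\le\delta_p(u)$. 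Coupling this with the recalled bound $\|u\|_p\le g_p(u)$ delivers $\|u\|_p\le\delta_p(u)$ at once.

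For the left-hand inequality $\gamma_p(u)\le\|u\|_p$ I would pass to the canonical representation of an element of $\ell_p\otimes X$. Under the isometric embedding $\ell_p\otimes X\hookrightarrow\ell_p(X)$ one writes $u=\sum_k e_k\otimes w_k$, where $(e_k)_k$ is the unit vector basis of $\ell_p$ and $(w_k)_k\in\ell_p(X)$ satisfies $\|(w_k)_k\|_p=\|u\|_p$. The definition \eqref{eq:15} of $\gamma_p$ then gives $\gamma_p(u)\le\|(e_k)_k\|_{p^*}^{mid}\,\|(w_k)_k\|_p$, so that the whole inequality collapses to the single normalization estimate $\|(e_k)_k\|_{p^*}^{mid}\le 1$ for the canonical basis of $\ell_p$; once this is in hand, $\gamma_p(u)\le\|u\|_p$ follows immediately.

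The step I expect to be the main obstacle is precisely this last estimate. In the Chevet--Saphar computation of $d_p$ one only needs $\|(e_k)_k\|_{p^*}^{w}=1$, which is elementary; here the weak norm must be replaced by the strictly larger mid norm, so the bound cannot come from \eqref{eq:3} alone. I would try to extract it from the special structure of the first factor: since $\ell_p^{*}=\ell_{p^*}$ is isometric to a subspace of $L_{p^*}(\mu)$, I would invoke a \cite[Theorem 4.5]{Karn}-type coincidence of summability classes on such spaces, in the same spirit as Lemma \ref{lpmid dual for Lp spaces}, in order to collapse the mid-$p^*$ norm of $(e_k)_k$ back onto its weak-$p^*$ norm. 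Verifying that this coincidence genuinely applies to the canonical basis $(e_k)_k$ itself, and not merely to sequences valued in $L_{p^*}$, is the delicate point on which the entire left-hand inequality rests; the case $p=1$, where $\gamma_1=d_1=g_1=\pi$ by Proposition \ref{dp and gammap}(ii), serves as a useful consistency check.
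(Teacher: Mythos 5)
Your right-hand inequality is correct and is essentially the paper's own argument: the paper reruns Ryan's Example 6.8 to get $\|u\|_{p}\leq \|(a_{j})_{j}\|_{p}\|(x_{j})_{j}\|_{p^{*}}^{w}\leq \|(a_{j})_{j}\|_{p}\|(x_{j})_{j}\|_{p^{*}}^{mid}$ and takes the infimum, which is exactly your route $\|u\|_p\le g_p(u)\le\delta_p(u)$. Your treatment of the left-hand inequality also mirrors the paper: the paper writes $u=\sum_k e_k\otimes u_k$ and asserts, without any justification, precisely the normalization $\|(e_k)_k\|_{p^*}^{mid}=1$ that you flagged as the main obstacle. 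Your suspicion was well founded, because that estimate is \emph{false} for every $1<p<\infty$. Test the truncated basis $(e_k)_{k=1}^n\subset\ell_p$ against the sequence $(e_m)_{m=1}^n\subset\ell_{p^*}=(\ell_p)^*$, whose weak $p^*$-norm is at most $n^{\max(0,\,1/p^*-1/p)}$ (it is $1$ for $p\le 2$, and $n^{1/p^*-1/p}$ by H\"older for $p>2$); since $\langle e_m,e_k\rangle$ vanishes unless $m=k$, the defining supremum of the mid norm gives
\begin{equation*}
\|(e_k)_{k=1}^n\|_{p^*}^{mid}\ \ge\ \frac{\left(\sum_{m=1}^n\sum_{k=1}^n|\langle e_m,e_k\rangle|^{p^*}\right)^{1/p^*}}{\|(e_m)_{m=1}^n\|_{p^*}^{w}}\ \ge\ \frac{n^{1/p^*}}{n^{\max(0,\,1/p^*-1/p)}}\ =\ n^{\min(1/p,\,1/p^*)},
\end{equation*}
which tends to infinity with $n$. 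So the quantity your proof (and the paper's) needs to be $1$ is in fact infinite for the full basis.

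Your proposed repair cannot close this gap, because the Karn--Sinha coincidence runs in the wrong direction: the theorem invoked in Lemma \ref{lpmid dual for Lp spaces} identifies the mid $q$-norm with the \emph{absolute} norm $\|\cdot\|_q$ on subspaces of $L_q(\mu)$, not with the weak norm. Where it does apply to the left factor (e.g.\ $p=2$, since $\ell_2\subset L_2$) it yields the opposite of what you want: $\|(e_k)_{k=1}^n\|_{2}^{mid}=\|(e_k)_{k=1}^n\|_{2}=\sqrt n$. Worse, this shows the left-hand inequality of \eqref{eq:17} itself fails for $1<p<\infty$, so no proof exists. Indeed, take $p=2$, $X=\ell_2$ and $u=\sum_{j=1}^n e_j\otimes e_j$. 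The isometry $\|\cdot\|_2^{mid}=\|\cdot\|_2$ for sequences in $\ell_2$ (mid $\le$ absolute always; absolute $\le$ mid by testing against $(e_m)_m\in B_{\ell_2^w(\ell_2)}$) turns the definition \eqref{eq:15} of $\gamma_2$ into $\inf\|(x_j)_j\|_2\|(y_j)_j\|_2$, which coincides with $\pi$ by Cauchy--Schwarz and the usual rescaling of representations; hence $\gamma_2(u)=\pi(u)=n$, whereas $\|u\|_2=\sqrt n$. Only $p=1$ survives, where $\gamma_1=\pi$ and $\ell_1(X)=\ell_1\hat{\otimes}_\pi X$. So, to your credit, you isolated exactly the step on which everything hinges --- something the paper's proof glosses over --- but the honest conclusion is that this step is not merely delicate: it is false, the proposition (for $1<p<\infty$) is false with it, and the only salvageable version of the left-hand inequality is the classical Chevet--Saphar one with the weak norm, $d_p(u)\le\|u\|_p$, in place of $\gamma_p$.
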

\begin{proof}
	Let $1\leq p<\infty$ and $u=\sum\limits_{j=1}^{n}a_{j}\otimes x_{j}\in\ell_{p}\otimes X$, where $a_{j}=(a_{jk})_{k}$. Then we can write $u=\sum\limits_{k=1}^{\infty}e_{k}\otimes u_{k}$ with  $u_{k}=\sum\limits_{j=1}^{n}a_{jk}x_{j}$. Note that $u$ can be identified with the element $(u_{k})_{k}$ in $\ell_{p}(X)$. Since $\|(e_{k})_{k}\|_{p^{*}}^{mid}=1$, we have 
	\begin{equation}
		\gamma_{p}(u)\leq\|u\|_{p}.
	\end{equation}
	Proceeding as in \cite[Example 6.8]{ryan}, 

	\begin{equation}\label{eq:34}
	\|u\|_{p}\leq \|(a_{j})_{j}\|_{p}\|(x_{j})_{j}\|_{p^{*}}^{w}\leq \|(a_{j})_{j}\|_{p}\|(x_{j})_{j}\|_{p^{*}}^{mid}.
	\end{equation}
	Since \eqref{eq:34} holds for every representation of $u$, we get
	\begin{equation}
		\gamma_{p}(u)\leq \|u\|_{p}\leq \delta_{p}(u)
	\end{equation}
	for every $u\in \ell_{p}\otimes X$.
\end{proof}
\begin{Lemma}\label{Convergence}
	Let $X$ and $Y$ be Banach spaces. For $1\leq p\leq \infty$, $(x_{j})_{j}\in \ell_{p^{*}}^{mid}(X)$ and $(y_{j})_{j}\in \ell_{p}(Y)$, then the series $\sum\limits_{j=1}^{\infty}x_{j}\otimes y_{j}$ converges in $\hat{X\otimes_{\gamma_{p}}} Y.$
\end{Lemma}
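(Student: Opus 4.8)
The plan is to establish convergence by showing that the sequence of partial sums $u_N = \sum_{j=1}^{N} x_j \otimes y_j$ is Cauchy in the norm $\gamma_p$ on $X \otimes Y$; since $\hat{X\otimes_{\gamma_p}}Y$ is complete by construction, a Cauchy sequence there converges. The key observation is that the definition of $\gamma_p$ in \eqref{eq:15} hands us an immediate upper bound: for any tail segment of the series, the partial sum $\sum_{j=M+1}^{N} x_j \otimes y_j$ is itself a representation of that element, so that
\begin{equation*}
\gamma_p\left(\sum_{j=M+1}^{N} x_j \otimes y_j\right) \leq \left\|(x_j)_{j=M+1}^{N}\right\|_{p^*}^{mid}\left\|(y_j)_{j=M+1}^{N}\right\|_{p}.
\end{equation*}
This reduces the entire problem to controlling the right-hand side as $M, N \to \infty$.

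**Next I would** invoke the hypotheses that $(x_j)_j \in \ell_{p^*}^{mid}(X)$ and $(y_j)_j \in \ell_p(Y)$. The tail $\left\|(y_j)_{j=M+1}^{N}\right\|_p = \bigl(\sum_{j=M+1}^{N} \|y_j\|^p\bigr)^{1/p}$ tends to $0$ as $M \to \infty$ precisely because $(y_j)_j \in \ell_p(Y)$, so the absolute $p$-summing tail vanishes. For the other factor, the quantity $\left\|(x_j)_{j=M+1}^{N}\right\|_{p^*}^{mid}$ is bounded above by the full norm $\left\|(x_j)_j\right\|_{p^*}^{mid}$, which is finite by hypothesis; more carefully, the $\ell_{p^*}^{mid}$-tails also converge to zero since membership in $\ell_{p^*}^{mid}(X)$ forces the tail norms to vanish (this follows from the sequence-class structure, as $c_{00}(X)$ is dense in the relevant sense and the tails of a fixed mid-summable sequence go to zero). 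Hence the product of the two tail factors tends to $0$, establishing that $(u_N)_N$ is Cauchy.

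**The main obstacle** I anticipate is justifying rigorously that the mid-$p^*$-summing tail $\left\|(x_j)_{j=M+1}^{N}\right\|_{p^*}^{mid}$ shrinks, rather than merely staying bounded; for the full argument one does not even need this, since it suffices that $\left\|(x_j)_{j=M+1}^{N}\right\|_{p^*}^{mid} \leq \left\|(x_j)_j\right\|_{p^*}^{mid} < \infty$ is a fixed bound while the $\ell_p$-tail of $(y_j)$ drives the product to zero. Thus the cleaner route is to use the boundedness of the mid-factor together with the genuine vanishing of the $\ell_p$-factor, which avoids the subtler tail-convergence question for $\ell_{p^*}^{mid}$ altogether. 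Once the Cauchy property is in hand, completeness of $\hat{X\otimes_{\gamma_p}}Y$ delivers the convergence of $\sum_{j=1}^{\infty} x_j \otimes y_j$, completing the proof.
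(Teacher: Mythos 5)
Your proposal is correct and follows essentially the same route as the paper: bound $\gamma_p$ of each tail segment $\sum_{j=M+1}^{N}x_j\otimes y_j$ by $\left\|(x_j)_j\right\|_{p^*}^{mid}\left(\sum_{j=M+1}^{N}\|y_j\|^p\right)^{1/p}$, conclude that the partial sums are Cauchy, and invoke completeness of $X\hat{\otimes}_{\gamma_p}Y$. You were also right to discard the intermediate claim that the $\ell_{p^*}^{mid}$-tail norms themselves vanish --- that density-type assertion is unjustified in general (and unnecessary); the paper, like your ``cleaner route,'' uses only the fixed bound by the full mid-norm together with the vanishing $\ell_p$-tail.
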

\begin{proof}
	Let us note that for finite sequences $(x_{k},x_{k+1},...,x_{n})\subset X$ and $(y_{k},y_{k+1},...,y_{n})\subset Y$, 
	\begin{equation}
		\gamma_{p}\left(\sum_{j=k}^{n}x_{j}\otimes y_{j}\right)\leq \|(x_{j})_{j}\|_{p^{*}}^{mid}\left(\sum_{j=k}^{n}\|y_{j}\|^{p}\right)^{1/p}	
	\end{equation}
	holds for any $k,n\in \mathbb{N}$ with $k\leq n$.
	Thus the series  $\sum\limits_{j=1}^{\infty}x_{j}\otimes y_{j}$ satisfies Cauchy's criterion and hence convergent in  $\hat{X\otimes_{\gamma_{p}}} Y$. 
\end{proof}

\begin{pro}\label{Existence of sequences}
Let $X$ and $Y$ be Banach spaces and $1\leq p\leq\infty.$ Then for each $u\in X\hat{\otimes}_{\gamma_{p}}Y$, there exist sequences $(x_{n}) _{n}\in \ell_{p^{*}}^{mid}(X)$ and $(y_{n})_{n}\in \ell_{p}(Y)$ such that  $\sum\limits_{n=1}^{\infty}x_{n}\otimes y_{n}$ in $ X\hat{\otimes}_{\gamma_{p}}Y$  converges to $u$. 
\end{pro}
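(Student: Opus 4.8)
The plan is to realize an arbitrary element of the completion as a limit of finite tensors and then reorganize the tails into the two required sequences using a standard dyadic-blocking argument. First I would recall that, by definition of the completion, every $u\in X\hat{\otimes}_{\gamma_p}Y$ is the limit of a sequence $(w_m)_m$ of elements of the algebraic tensor product $X\otimes Y$, so by passing to a rapidly converging subsequence I may assume $\gamma_p(u-w_m)<2^{-m}$ for each $m$, with $w_0=0$. Setting $v_m:=w_m-w_{m-1}$, I obtain $u=\sum_{m=1}^{\infty}v_m$ in $X\hat{\otimes}_{\gamma_p}Y$ with $\gamma_p(v_m)\leq\gamma_p(u-w_m)+\gamma_p(u-w_{m-1})<2^{-m}+2^{-(m-1)}<2^{-m+2}$.

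The heart of the argument is to pick, for each block $v_m$, a finite representation that nearly attains the infimum defining $\gamma_p$ and then scale it so that the two factors pool into genuinely $p^*$-mid-summable and $p$-summable sequences. Concretely, since $\gamma_p(v_m)<2^{-m+2}$, I would choose a representation $v_m=\sum_{j}x^{(m)}_j\otimes y^{(m)}_j$ with $\|(x^{(m)}_j)_j\|_{p^*}^{mid}\,\|(y^{(m)}_j)_j\|_p<2^{-m+2}$, and by homogeneity of the tensor I may rescale within each block so that $\|(x^{(m)}_j)_j\|_{p^*}^{mid}=\|(y^{(m)}_j)_j\|_p=:\lambda_m$ with $\lambda_m<2^{(-m+2)/2}=2^{(2-m)/2}$. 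Concatenating all blocks gives two sequences $(x_n)_n$ and $(y_n)_n$; it remains to verify that $(x_n)_n\in\ell_{p^*}^{mid}(X)$ and $(y_n)_n\in\ell_p(Y)$. For the $\ell_p(Y)$ factor this is immediate since $\sum_m\lambda_m^p\leq\sum_m 2^{(2-m)p/2}<\infty$. For the mid-norm factor I would use that $\|\cdot\|_{p^*}^{mid}$ is itself (the norm of) a sequence class in the sense recalled in the introduction, so concatenation behaves subadditively in the appropriate $\ell_{p^*}$-averaged sense; the estimate $\sum_m\lambda_m^{p^*}<\infty$ again controls the total mid-norm. Finally, Lemma \ref{Convergence} guarantees that $\sum_n x_n\otimes y_n$ converges in $X\hat{\otimes}_{\gamma_p}Y$, and since its partial sums exhaust the $w_m$, its limit is exactly $u$.

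The main obstacle I anticipate is the verification that the concatenated sequence $(x_n)_n$ genuinely lies in $\ell_{p^*}^{mid}(X)$ rather than merely in the weaker space $\ell_{p^*}^w(X)$. Unlike the plain $\ell_p$-norm on the $Y$-side, the mid-norm involves a double supremum over $B_{\ell_{p^*}^w(X^*)}$, so one cannot naively add the block mid-norms; one must instead exploit the fact that the mid-norm of a concatenation of blocks is bounded by the $\ell_{p^*}$-combination of the individual block mid-norms, which follows from testing against a single functional sequence $(x_n^*)_n\in B_{\ell_{p^*}^w(X^*)}$ and splitting the resulting double sum block by block. Once this block-subadditivity is established, the summability $\sum_m\lambda_m^{p^*}<\infty$ closes the argument. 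I would remark that the balancing choice $\|(x^{(m)}_j)_j\|_{p^*}^{mid}=\|(y^{(m)}_j)_j\|_p$ is precisely what makes both tails summable simultaneously, so this scaling step is where the proof must be carried out with care.
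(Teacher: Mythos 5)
Your proposal is correct and follows essentially the same route as the paper's own proof: decompose $u$ into a series of algebraic tensors with rapidly decreasing $\gamma_p$-norms, pick near-optimal representations of each block with both factor norms controlled, concatenate, verify the $Y$-side in $\ell_p(Y)$ directly and the $X$-side in $\ell_{p^*}^{mid}(X)$ by testing against a single $(f_m)_m\in B_{\ell_{p^*}^w(X^*)}$ and splitting the resulting double sum block by block (this block-subadditivity is precisely the paper's key estimate), then conclude convergence via Lemma \ref{Convergence} and identify the limit as $u$ through the block partial sums. The only slip is at the first block: since $\gamma_p(u-w_0)=\gamma_p(u)$ need not be less than $1$, your bound $\gamma_p(v_1)<2^{-1}+2^{0}$ can fail; either normalize $\gamma_p(u)<1$ at the outset or treat the first block separately (as the paper does, allotting it $\gamma_p(u)+\delta$ while all later blocks get geometrically small bounds), which changes nothing in the summability estimates.
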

\begin{proof}
	Let $u\in X\hat{\otimes}_{\gamma_{p}}Y$. Then for every $\delta>0$, there exist a sequence $(u_{i})_{i}$ such that
	\begin{equation}
		\sum_{i}\gamma_{p}(u_{i})\leq (1+\delta)\gamma_{p}(u)
	\end{equation}
and the series $\sum\limits_{i}u_{i}$ converges to $u$ in $ X\hat{\otimes}_{\gamma_{p}}Y$. We can obtain the sequence $(u_{i})_{i} \subset X\otimes Y$ such that $u=\sum\limits_{i=1}^{\infty}u_{i}$ with $\gamma_{p}(u_{1})<\gamma_{p}(u)+\delta$ and $\gamma_{p}(u_{i})<\frac{\delta^{2}}{4^{i}}$ for $i\geq 2$.
	Choose  a representation of $u_{i} = \sum\limits_{j=1}^{r_{i}}x_{j}\otimes y_{j}$ satisfying $\|(x_{1j})_{j}\|_{p^{*}}^{mid}<(\gamma_{p}+\delta)$ and $\|(y_{1j})_{j}\|_{p}\leq 1$ and for $i\geq 2$,
	$\|(x_{ij})_{j}\|_{p^{*}}^{mid}<\delta/2^{i}$ and $\|(y_{ij})_{j}\|_{p}<\delta/2^{i}$. Let  $(x_{n})_{n}$ and $(y_{n})_{n}$ be the infinite sequences obtained from concatenating the finite sequences $(x_{ij})_{j=1}^{r_{i}}$ and $(y_{ij})_{j=1}^{r_{i}}.$
\\\\	 For proving that the series $\sum\limits_{n=1}^{\infty}x_{n}\otimes y_{n}$ converges to $u$ in $X\hat{\otimes}_{\gamma_{p}}Y$ we need to show that  $(y_{n})_{n}\in \ell_{p}(Y)$ and $(x_{n})_{n}\in \ell_{p^{*}}^{mid}(X)$.
Now,
	\begin{equation}
		\|(y_{n})_{n}\|_{p}=\left(\sum_{i=1}^{\infty}\sum_{j=1}^{r_{i}}\|y_{ij}\|^{p}\right)^{1/p}<\left(1+\delta^{p}\sum_{i=2}^{\infty}2^{-pi}\right)^{1/p}.
	\end{equation} 
Let $(f_{m})_{m}\in B_{\ell_{p^{*}}^{w}(X^{*})}$ be arbitrarily chosen. Then using the definition of $\|.\|_{p^{*}}^{mid}$-norm, we have
\begin{align*}
	\left(\sum_{m=1}^{\infty}\sum_{n=1}^{\infty}|f_{m}(x_{n})|^{p^{*}}\right)^{1/p^{*}}=\left(\sum_{m=1}^{\infty}\sum_{i=1}^{\infty}\sum_{j=1}^{r_{i}}|f_{m}(x_{ij})|^{p^{*}}\right)^{1/p^{*}} \leq \left(\sum_{i=1}^{\infty}\left(\|(x_{ij})_{j=1}^{r_{i}}\|_{p^{*}}^{mid}\right)^{p^{*}}\right)^{1/p^{*}}.
\end{align*}
Taking supremum over the closed unit ball of $\ell_{p^{*}}^{w}(X^{*})$, we obtain
	\begin{equation}
	\|(x_{n})_{n}\|_{p^{*}}^{mid}\leq	\left((\gamma_{p}(u)+\delta)^{p^{*}}+\delta^{p^{*}}\sum_{i=2}^{\infty}2^{-p^{*}i}\right)^{1/p^{*}}.
	\end{equation}
Consequently, the series $\sum\limits_{n}x_{n}\otimes y_{n}$ converges in $X\hat{\otimes}_{\gamma_{p}} Y$ by using Lemma \ref{Convergence}. This completes the proof.
\end{proof}

\begin{thm}\label{gammap and Pipmid}
	For $1\leq p\leq\infty$, a linear operator $T:X\rightarrow Y^{*}$ is absolutely mid $p^{*}$-summing if and  only if the linear functional $\Phi_{T}$ corresponding to $T$  belongs to $(X\hat{\otimes}_{\gamma_{p}} Y)^{*}$. In this case, the operator norm of $\Phi_{T}$ coincides with $\pi_{p^{*}}^{mid}(T)$.	
\end{thm}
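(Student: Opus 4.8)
The plan is to establish the duality $\Pi_{p^{*}}^{mid}(X,Y^{*})\cong(X\hat{\otimes}_{\gamma_{p}}Y)^{*}$ by mimicking the classical Chevet--Saphar duality $(X\hat{\otimes}_{d_{p^{*}}}Y)^{*}\cong\Pi_{p}(X,Y^{*})$, replacing the weak norm $\|\cdot\|_{p^{*}}^{w}$ that appears in $d_{p^{*}}$ by the mid norm $\|\cdot\|_{p^{*}}^{mid}$ appearing in $\gamma_{p}$. To each $T\in\mathcal{L}(X,Y^{*})$ I associate the bilinear form $(x,y)\mapsto\langle Tx,y\rangle$ and the induced linear map $\Phi_{T}$ on $X\otimes Y$ determined by $\Phi_{T}(x\otimes y)=\langle Tx,y\rangle$, which is well defined on the algebraic tensor product. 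I would prove the two implications separately and read off the norm equality from the two resulting inequalities. Throughout I would rely on the elementary duality
\begin{equation*}
\|(z_{j})_{j}\|_{p^{*}}=\sup\Big\{\Big|\sum_{j}\langle z_{j},y_{j}\rangle\Big|:(y_{j})_{j}\in B_{\ell_{p}(Y)}\Big\}
\end{equation*}
for $Y^{*}$-valued sequences, which follows from H\"older's inequality together with an optimal choice of norming vectors and phases, and which simultaneously detects membership in $\ell_{p^{*}}(Y^{*})$ via its finite truncations.

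For the implication that an absolutely mid $p^{*}$-summing $T$ yields a bounded functional, I would estimate $\Phi_{T}$ on a finite tensor $u=\sum_{j=1}^{n}x_{j}\otimes y_{j}$ directly. By H\"older,
\begin{equation*}
|\Phi_{T}(u)|\leq\sum_{j=1}^{n}\|Tx_{j}\|\,\|y_{j}\|\leq\|(Tx_{j})_{j=1}^{n}\|_{p^{*}}\,\|(y_{j})_{j=1}^{n}\|_{p}\leq\pi_{p^{*}}^{mid}(T)\,\|(x_{j})_{j=1}^{n}\|_{p^{*}}^{mid}\,\|(y_{j})_{j=1}^{n}\|_{p},
\end{equation*}
where the final step uses $\|(Tx_{j})_{j}\|_{p^{*}}=\|\hat{T}((x_{j})_{j})\|_{p^{*}}\leq\pi_{p^{*}}^{mid}(T)\,\|(x_{j})_{j}\|_{p^{*}}^{mid}$. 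Taking the infimum over all representations of $u$ gives $|\Phi_{T}(u)|\leq\pi_{p^{*}}^{mid}(T)\,\gamma_{p}(u)$, so $\Phi_{T}$ is $\gamma_{p}$-continuous on $X\otimes Y$ and extends uniquely to $X\hat{\otimes}_{\gamma_{p}}Y$ with $\|\Phi_{T}\|\leq\pi_{p^{*}}^{mid}(T)$.

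For the converse, starting from $\Phi\in(X\hat{\otimes}_{\gamma_{p}}Y)^{*}$ and defining $T$ by $\langle Tx,y\rangle=\Phi(x\otimes y)$, I would show $\hat{T}$ maps $\ell_{p^{*}}^{mid}(X)$ into $\ell_{p^{*}}(Y^{*})$ boundedly. Fix $(x_{j})_{j}\in\ell_{p^{*}}^{mid}(X)$ and any $(y_{j})_{j}\in B_{\ell_{p}(Y)}$. By Lemma \ref{Convergence} the series $\sum_{j}x_{j}\otimes y_{j}$ converges in $X\hat{\otimes}_{\gamma_{p}}Y$, and continuity and linearity of $\Phi$ give
\begin{equation*}
\Big|\sum_{j}\langle Tx_{j},y_{j}\rangle\Big|=\Big|\Phi\Big(\sum_{j}x_{j}\otimes y_{j}\Big)\Big|\leq\|\Phi\|\,\gamma_{p}\Big(\sum_{j}x_{j}\otimes y_{j}\Big)\leq\|\Phi\|\,\|(x_{j})_{j}\|_{p^{*}}^{mid}\,\|(y_{j})_{j}\|_{p}.
\end{equation*}
Taking the supremum over $(y_{j})_{j}\in B_{\ell_{p}(Y)}$ and invoking the duality identity yields $\|(Tx_{j})_{j}\|_{p^{*}}\leq\|\Phi\|\,\|(x_{j})_{j}\|_{p^{*}}^{mid}$, so $\hat{T}$ is well defined and continuous with $\pi_{p^{*}}^{mid}(T)\leq\|\Phi\|$. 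Combining the two inequalities gives $\|\Phi_{T}\|=\pi_{p^{*}}^{mid}(T)$.

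The two H\"older-type estimates are routine; the steps I expect to demand the most care are the uniform verification of the duality identity for $\ell_{p^{*}}(Y^{*})$ (in particular the reduction from infinite to finite index sets, so that finiteness of the supremum genuinely forces $(Tx_{j})_{j}\in\ell_{p^{*}}(Y^{*})$) and the disposal of the boundary exponents. For $p=1$ one has $\gamma_{1}=\pi$ by Proposition \ref{dp and gammap}, so the statement collapses to the standard identification $(X\hat{\otimes}_{\pi}Y)^{*}\cong\mathcal{L}(X,Y^{*})$, since every bounded operator is absolutely mid $\infty$-summing; the case $p=\infty$, where $\ell_{\infty}^{mid}(\cdot)=\ell_{\infty}(\cdot)$ and $p^{*}=1$, should follow from the same argument once one checks that Lemma \ref{Convergence} and the duality identity remain valid at these endpoints.
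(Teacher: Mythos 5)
Your proposal is correct and takes essentially the same route as the paper: the forward direction is the identical H\"older-plus-infimum estimate $|\Phi_T(u)|\le \pi_{p^*}^{mid}(T)\,\|(x_j)_j\|_{p^*}^{mid}\,\|(y_j)_j\|_{p}$, and your converse tests $\Phi$ against (near-)norming sequences in $B_{\ell_p(Y)}$, which is exactly the paper's mechanism, with $p=1$ likewise disposed of via $\gamma_1=\pi$. The only difference is packaging: where you invoke the duality $\|(z_j)_j\|_{p^*}=\sup\bigl\{\bigl|\sum_j\langle z_j,y_j\rangle\bigr| : (y_j)_j\in B_{\ell_p(Y)}\bigr\}$ together with Lemma \ref{Convergence} to handle infinite sequences, the paper works with finite sequences and chooses the norming vectors explicitly, namely $\langle y_j,Tx_j\rangle=\|Tx_j\|^{p^*}$ and $\|y_j\|\le(1+\epsilon)\|Tx_j\|^{p^*-1}$, so that $p(p^*-1)=p^*$ yields the same inequality $\bigl(\sum_j\|Tx_j\|^{p^*}\bigr)^{1/p^*}\le(1+\epsilon)\|\Phi_T\|\,\|(x_j)_j\|_{p^*}^{mid}$.
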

\begin{proof}
We prove the result for the case $1< p\leq \infty$ and   for $p=1$, it is clear from Proposition \ref{dp and gammap} that $\gamma_{1}=\pi$ and hence $\Pi_{1}^{mid}(X,Y^{*})=\mathcal{L}(X,Y^{*})$.	Consider $T\in\Pi_{p^{*}}^{mid}(X,Y^{*})$ and  $u=\sum\limits_{j=1}^{n}x_{j}\otimes y_{j} \in X\otimes Y$. Then
	\begin{align}\label{eq:33}
		\left|\Phi_{T}(u)\right|&=\left|\sum_{j=1}^{n}
		\left<y_{j},Tx_{j}\right>\right|\nonumber\\ 
		&\leq \left\|(y_{j})_{j=1}^{n}\right\|_{p}\left\|(Tx_{j})_{j=1}^{n}\right\|_{p^{*}}\nonumber\\ \
		&\leq \pi_{p^*}^{mid}(T) \left\|(x_{j})_{j=1}^{n}\right\|_{p^{*}}^{mid}\left\|(y_{j})_{j=1}^{n}\right\|_{p} \nonumber\\ 
		& \leq \pi_{p^*}^{mid}(T) \gamma_{p}(u)
	\end{align}
	by taking infimum over  all representations of $u$. Thus  $\Phi_{T} \in (X\otimes_{\gamma_{p}}Y)^{*}$.
	\\\\ Conversely, assume that $T:X\rightarrow Y^*$ be a continuous linear operator such that $\Phi_{T} \in (X\otimes_{\gamma_{p}}Y)^{*}$.
	Let $x_{1},x_{2},...,x_{n}\in X$. For each $j=1,2,\dots,n$, choose $y_{j}\in  Y$ and $\epsilon >0$ such that $\left<y_{j},Tx_{j}\right>=\|Tx_{j}\|^{p^{*}}$ and $\|y_{j}\|\leq(1+\epsilon)\|Tx_{j}\|^{p^{*}-1}$. Then
	\begin{align}\label{eq:36}
		\sum_{j=1}^{n}\|Tx_{j}\|^{p^{*}}&= \left|\sum_{j=1}^{n}\left<y_{j},Tx_{j}\right>\right|=	\left|\Phi_T(\sum_{j=1}^{n}x_{j}\otimes y_{j})\right|\\ \nonumber
		&\leq \|\Phi_{T}\| \left\|(x_{j})_{j=1}^{n}\right\|_{p^{*}}^{mid}\left\|(y_{j})_{j=1}^{n}\right\|_{p}\\ \nonumber
		&\leq \|\Phi_{T}\| \left\|(x_{j})_{j=1}^{n}\right\|_{p^{*}}^{mid}(1+\epsilon)\left(\sum_{j=1}^{n}\|Tx_{j}\|^{p(p^{*}-1)}\right)^{1/p}\nonumber.
	\end{align}
	Since $p(p^{*}-1)=p^{*}$, we have
	\begin{equation}\label{eq:35}
		\left(	\sum_{j=1}^{n}\|Tx_{j}\|^{p^{*}}\right)^{1/p^{*}}\leq \|\Phi_{T}\|(1+\epsilon)\left\|(x_{j})_{j=1}^{n}\right\|_{p^{*}}^{mid}.
	\end{equation}
	Thus $T\in \Pi_{p^{*}}^{mid}(X,Y^{*})$. Together, equations \eqref{eq:33} and \eqref{eq:35} gives
	 $\|\Phi_{T}\|=\pi_{p^{*}}^{mid}(T)$.
\end{proof}
As a consequence of Theorem \ref{gammap and Pipmid} and  the representation theorem for maximal operator ideals, we have
\begin{cor}\label{Maximal Pipmid}
	For $1\leq p\leq \infty$, $[\Pi_{p}^{mid},\pi_{p}^{mid}]$ is a maximal Banach operator ideal.
\end{cor}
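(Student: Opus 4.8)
The plan is to derive the maximality of $[\Pi_{p}^{mid},\pi_{p}^{mid}]$ directly from the duality established in Theorem \ref{gammap and Pipmid} together with the representation theorem for maximal operator ideals \cite[Theorem 17.5]{Defant}. The key observation is that Theorem \ref{gammap and Pipmid} exhibits, for the finitely generated tensor norm $\gamma_{p}$, an isometric identification
\begin{equation*}
\Pi_{p^{*}}^{mid}(X,Y^{*})\cong (X\hat{\otimes}_{\gamma_{p}}Y)^{*}
\end{equation*}
valid for all Banach spaces $X,Y$. By Theorem \ref{tensor norm gammap}, $\gamma_{p}$ is a finitely generated tensor norm, so this is precisely the form of identification that the representation theorem characterizes.

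First I would recall the statement of \cite[Theorem 17.5]{Defant}: to every finitely generated tensor norm $\alpha$ there corresponds a maximal Banach operator ideal $[\mathcal{U},\|\cdot\|_{\mathcal{U}}]$ satisfying $\mathcal{U}(X,Y^{*})\cong (X\hat{\otimes}_{\alpha}Y)^{*}$ isometrically for all $X,Y$; moreover this correspondence is a bijection, and the ideal so obtained is maximal by construction. Applying this to $\alpha=\gamma_{p}$ produces a maximal Banach operator ideal $[\mathcal{U},\|\cdot\|_{\mathcal{U}}]$ whose components are given by $\mathcal{U}(X,Y^{*})\cong (X\hat{\otimes}_{\gamma_{p}}Y)^{*}$. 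Comparing this with the isometric identification from Theorem \ref{gammap and Pipmid}, I would conclude that the components $\mathcal{U}(X,Y^{*})$ coincide isometrically with $\Pi_{p^{*}}^{mid}(X,Y^{*})$ on all dual-valued targets. Since a maximal ideal is determined by its action into dual spaces, this forces $[\mathcal{U},\|\cdot\|_{\mathcal{U}}]=[\Pi_{p^{*}}^{mid},\pi_{p^{*}}^{mid}]$, and hence the latter is maximal. Replacing $p^{*}$ by $p$ (equivalently, running the argument with the conjugate index) then yields maximality of $[\Pi_{p}^{mid},\pi_{p}^{mid}]$ for all $1\leq p\leq\infty$.

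The main obstacle I anticipate is the justification that agreement of the components on dual-valued targets suffices to identify the ideal everywhere and thereby transfer maximality. This rests on the fact that a maximal operator ideal is itself recovered from its values $\mathcal{U}(X,Y^{*})$ via the local/bidual procedure built into the representation theorem, so one must verify that $\pi_{p^{*}}^{mid}$ satisfies the maximality condition $\pi_{p^{*}}^{mid}(T)=\sup\{\pi_{p^{*}}^{mid}(Q_{N}TI_{M}):M\subseteq X,\ N\subseteq Y\ \text{finite-dimensional/codimensional}\}$, or equivalently that $T\in\Pi_{p^{*}}^{mid}$ with control of the norm whenever all such finite compressions are uniformly bounded. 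In the present setup this is precisely what the finitely generated nature of $\gamma_{p}$ (Theorem \ref{tensor norm gammap}) encodes, since the finite-dimensional infimum in \eqref{eq:2} matches the finite-rank compressions on the operator side. I would therefore make the argument clean by invoking the representation theorem as a black box: the isometric duality $\Pi_{p^{*}}^{mid}(X,Y^{*})\cong(X\hat{\otimes}_{\gamma_{p}}Y)^{*}$ for a \emph{finitely generated} tensor norm $\gamma_{p}$ is, by that theorem, equivalent to $[\Pi_{p^{*}}^{mid},\pi_{p^{*}}^{mid}]$ being the maximal ideal associated to $\gamma_{p}$, and maximality is immediate.
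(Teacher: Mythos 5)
Your proposal is correct and follows exactly the paper's route: the paper deduces the corollary in one line from Theorem \ref{gammap and Pipmid} (the isometric identification $\Pi_{p^{*}}^{mid}(X,Y^{*})\cong (X\hat{\otimes}_{\gamma_{p}}Y)^{*}$) together with the representation theorem for maximal operator ideals \cite[Theorem 17.5]{Defant}, relying on $\gamma_{p}$ being finitely generated (Theorem \ref{tensor norm gammap}). Your additional care with the index swap $p\leftrightarrow p^{*}$ and with why duality against a finitely generated tensor norm forces maximality is a faithful unpacking of what the paper leaves implicit, not a different argument.
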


\begin{cor}\label{T and T**}
Let $X,Y$ be Banach spaces and $1\leq p\leq \infty$. Then $T \in \Pi_{p}^{mid}(X,Y)$ if and only if $T ^{**}\in \Pi_{p}^{mid}(X^{**},Y^{**})$. Moreover, $\pi_p^{mid}(T) =\pi_p^{mid}(T^{**}).$
\end{cor}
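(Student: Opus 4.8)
The plan is to prove the two implications separately while keeping track of the norms, so that the equality $\pi_p^{mid}(T)=\pi_p^{mid}(T^{**})$ drops out at the same time. Throughout, let $J_X\colon X\to X^{**}$ and $J_Y\colon Y\to Y^{**}$ denote the canonical isometric embeddings, which satisfy the standard relation $T^{**}J_X=J_Y T$.

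First I would dispose of the easy implication. Suppose $T^{**}\in\Pi_p^{mid}(X^{**},Y^{**})$. Since $\Pi_p^{mid}$ is an operator ideal, precomposing with $J_X$ gives $J_Y T=T^{**}J_X\in\Pi_p^{mid}(X,Y^{**})$ with $\pi_p^{mid}(J_Y T)\le\pi_p^{mid}(T^{**})\|J_X\|=\pi_p^{mid}(T^{**})$. Now, because $J_Y$ is an isometry, $\|J_Y T x_j\|=\|T x_j\|$ for every $x_j\in X$, so the maps $\widehat{J_Y T}\colon\ell_p^{mid}(X)\to\ell_p(Y^{**})$ and $\hat T\colon\ell_p^{mid}(X)\to\ell_p(Y)$ are bounded simultaneously and have exactly the same norm. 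Hence $T\in\Pi_p^{mid}(X,Y)$ and $\pi_p^{mid}(T)=\pi_p^{mid}(J_Y T)\le\pi_p^{mid}(T^{**})$.

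For the reverse implication I would invoke maximality. By Corollary \ref{Maximal Pipmid}, $[\Pi_p^{mid},\pi_p^{mid}]$ is a maximal Banach operator ideal, and every maximal Banach operator ideal $[\mathcal U,\|\cdot\|_{\mathcal U}]$ enjoys the bidual property $T\in\mathcal U(X,Y)\Rightarrow T^{**}\in\mathcal U(X^{**},Y^{**})$ with $\|T^{**}\|_{\mathcal U}\le\|T\|_{\mathcal U}$ (see \cite{Defant}). Applying this to $\mathcal U=\Pi_p^{mid}$ yields $\pi_p^{mid}(T^{**})\le\pi_p^{mid}(T)$, and together with the previous paragraph this gives both the equivalence and the equality $\pi_p^{mid}(T)=\pi_p^{mid}(T^{**})$.

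If instead a self-contained argument for $\pi_p^{mid}(T^{**})\le\pi_p^{mid}(T)$ is wanted, the natural route is to use the finite-dimensional description of the maximal norm, namely $\pi_p^{mid}(S)=\sup_{M,N}\pi_p^{mid}(Q_N S I_M)$ over finite-dimensional subspaces $M$ and finite-codimensional subspaces $N$, applied to $S=T^{**}$, and then to transport the resulting finite-dimensional data from $X^{**}$ and $Y^{**}$ back to $X$ and $Y$ via the principle of local reflexivity. I expect the main obstacle to lie precisely here: the mid $p$-summing norm of a finite family $(\xi_j)$ in $X^{**}$ is a supremum over the infinite-dimensional ball $B_{\ell_p^{w}(X^{***})}$ (equivalently, over all operators into $\ell_p$), so the local reflexivity perturbation must be arranged to preserve $\|\cdot\|_p^{mid}$ up to $\varepsilon$ uniformly over all these tests. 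This uniformity is the delicate point, and it is the reason the representation-theoretic argument of the preceding paragraph, resting on Corollary \ref{Maximal Pipmid} and Theorem \ref{gammap and Pipmid}, is the cleaner path.
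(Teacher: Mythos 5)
Your proposal is correct and takes essentially the same route as the paper: the paper's one-line proof cites Theorem \ref{gammap and Pipmid} together with \cite[\S 17.8, Corollary 4]{Defant} (the bidual theorem for maximal ideals), which is exactly the maximality argument you invoke via Corollary \ref{Maximal Pipmid}. Your additional hands-on proof of the direction $T^{**}\in\Pi_{p}^{mid}(X^{**},Y^{**})\Rightarrow T\in\Pi_{p}^{mid}(X,Y)$ (regularity of $\Pi_{p}^{mid}$ via the isometry $J_{Y}$, plus the ideal property applied to $T^{**}J_{X}$) is a correct, if redundant, elaboration of what the cited Defant--Floret result already supplies.
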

\begin{proof}
	The result follows directly from Theorem \ref{gammap and Pipmid} and  \cite[\S 17.8  Corollary 4 ]{Defant}.
\end{proof}
The next proposition gives some equivalent descriptions for absolutely mid $p$-summing operators.
\begin{pro}
	Let $1\leq p\leq\infty$, $X,Y$ be Banach spaces and $T$ be an operator from $X$ to $Y$. Then the following are equivalent:
	\begin{enumerate}[(a)]
		\item There exists $C>0$ such that,
		\begin{equation}\label{eq:37}
			\left|\sum_{j=1}^{n}\left<Tx_{j},f_{j}\right>\right|\leq C \|(x_{j})_{j=1}^{n}\|_{p^{*}}^{mid}\|(f_{j})_{j=1}^{n}\|_{p}
		\end{equation}
		for every finite sequences $\left(x_{1},x_{2},...,x_{n}\right)$ and $\left(f_{1},f_{2},...,f_{n}\right)$ in $X$ and $Y^{*}$ respectively.
		\item There exists $C>0$ such that,
		\begin{equation}\label{eq:38}
			\|(Tx_{j})_{j=1}^{n}\|_{p^{*}}\leq C \|(x_{j})_{j=1}^{n}\|_{p^{*}}
		\end{equation}
		for every finite sequence $\left(x_{1},x_{2}\,...,x_{n}\right)$
		in $X$.
		\item The map $\hat{T}:\ell_{p^{*}}^{mid}(X)\rightarrow \ell_{p^{*}}(Y)$ is well defined and continuous.
		\item There exists $C>0$ such that,
		\begin{equation}\label{eq:39}
			\left(\sum_{j=1}^{n}\left\|(Tx_{j})\right\|^{p^{*}}\right)^{1/p^{*}}\leq C \|(x_{j})_{j=1}^{n}\|_{p^{*}}^{mid} 
		\end{equation}
		for every finite sequence  $\left(x_{1},x_{2}\,...,x_{n}\right)$
		in $X$. 
		\item There exists $C>0$ such that,
		\begin{equation}\label{eq:40}
			\left(\sum_{j=1}^{\infty}\left\|(Tx_{j})\right\|^{p^{*}}\right)^{1/p^{*}}\leq C \|(x_{j})_{j=1}^{n}\|_{p^{*}}^{mid} 
		\end{equation}
		for every sequences $\left(x_{j}\right)_{j}$ in $\ell_{p^{*}}^{mid}(X)$. 
	\end{enumerate}
	Furthermore,
$
		\pi_{p^{*}}(T)=\|\hat{T}\|=\inf\{C:\eqref{eq:37} \textnormal{ holds}\}=inf\{C:\eqref{eq:38}  \textnormal{ holds} \}= \inf\{C:\eqref{eq:39} \textnormal{ holds}\}= \inf\{C:\eqref{eq:40} \textnormal{ holds}\}.
$
\end{pro}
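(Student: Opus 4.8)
The plan is to recognise condition (c) as the defining property of $T\in\Pi_{p^{*}}^{mid}(X,Y)$, so that $\|\hat{T}\|=\pi_{p^{*}}^{mid}(T)$ holds by definition, and then to close a cycle of implications showing that the optimal constants in (a), (b), (d), (e) all coincide with $\|\hat{T}\|$. Every listed condition is phrased with the exponent $p^{*}$ and the norms $\|\cdot\|_{p^{*}}^{mid}$ and $\|\cdot\|_{p^{*}}$, so the whole argument is an exact transcription, at the level of finite sequences, of the two estimates already appearing in the proof of Theorem~\ref{gammap and Pipmid}; working directly with finite sequences rather than with tensor functionals has the advantage of removing the dual-codomain restriction of that theorem, so that an arbitrary Banach space $Y$ is allowed. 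I would first dispose of the endpoint $p=1$, where $\ell_{\infty}^{mid}=\ell_{\infty}$ forces $\Pi_{1}^{mid}(X,Y)=\mathcal{L}(X,Y)$ and each condition holds with $C=\|T\|$, and then treat $1<p\le\infty$.

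Next I would establish $(c)\Longleftrightarrow(d)\Longleftrightarrow(e)$. Since $\|\hat{T}\|=\sup\{\|(Tx_{j})_{j}\|_{p^{*}}:\|(x_{j})_{j}\|_{p^{*}}^{mid}\le 1\}$, condition (c) is exactly the infinite-sequence inequality (e), and restricting to finitely supported sequences yields the finite inequality (d). Conversely, (d) propagates to (e) because the $\|\cdot\|_{p^{*}}^{mid}$-norm is monotone under truncation: applying (d) to the initial segments $(x_{1},\dots,x_{n})$ of a sequence $(x_{j})_{j}\in\ell_{p^{*}}^{mid}(X)$ gives $\big(\sum_{j=1}^{n}\|Tx_{j}\|^{p^{*}}\big)^{1/p^{*}}\le C\,\|(x_{j})_{j}\|_{p^{*}}^{mid}$, and letting $n\to\infty$ delivers both $(Tx_{j})_{j}\in\ell_{p^{*}}(Y)$ and the desired bound. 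This simultaneously identifies $\|\hat{T}\|=\inf\{C:\eqref{eq:39}\}=\inf\{C:\eqref{eq:40}\}$.

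Finally I would prove $(a)\Longleftrightarrow(d)$ with matching constants, which carries the only real content. For $(d)\Rightarrow(a)$ I would apply H\"older's inequality with the conjugate exponents $p^{*}$ and $p$ to the pairing, obtaining $|\sum_{j}\langle Tx_{j},f_{j}\rangle|\le\|(Tx_{j})_{j}\|_{p^{*}}\|(f_{j})_{j}\|_{p}\le C\,\|(x_{j})_{j}\|_{p^{*}}^{mid}\|(f_{j})_{j}\|_{p}$. The reverse implication is the delicate step: given $x_{1},\dots,x_{n}$, I would use the duality between $\ell_{p}$ and $\ell_{p^{*}}$ to choose nonnegative scalars $(\lambda_{j})$ with $\|(\lambda_{j})_{j}\|_{p}\le 1$ and $\sum_{j}\lambda_{j}\|Tx_{j}\|=\|(Tx_{j})_{j}\|_{p^{*}}$, and for each $j$ a Hahn--Banach functional $f_{j}\in B_{Y^{*}}$ with $\langle Tx_{j},f_{j}\rangle=\|Tx_{j}\|$; then $g_{j}=\lambda_{j}f_{j}$ satisfies $\|(g_{j})_{j}\|_{p}\le 1$ and $\sum_{j}\langle Tx_{j},g_{j}\rangle=\|(Tx_{j})_{j}\|_{p^{*}}$, so (a) returns exactly $\|(Tx_{j})_{j}\|_{p^{*}}\le C\,\|(x_{j})_{j}\|_{p^{*}}^{mid}$, which is (d). This is precisely the norming choice used in the converse half of Theorem~\ref{gammap and Pipmid}, and the endpoints $p^{*}\in\{1,\infty\}$ require only the obvious adjustment in selecting $(\lambda_{j})$. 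Since (b) is merely (d) rewritten as the single inequality $\|(Tx_{j})_{j}\|_{p^{*}}\le C\,\|(x_{j})_{j}\|_{p^{*}}^{mid}$, the chain $(a)\Leftrightarrow(b)\Leftrightarrow(c)\Leftrightarrow(d)\Leftrightarrow(e)$ is complete, and assembling the constant bookkeeping yields $\pi_{p^{*}}^{mid}(T)=\|\hat{T}\|=\inf\{C:\eqref{eq:37}\}=\cdots=\inf\{C:\eqref{eq:40}\}$. The main obstacle, such as it is, lies in the norming argument for $(a)\Rightarrow(d)$ and in justifying the truncation limit in $(d)\Rightarrow(e)$; all remaining implications are a direct unwinding of the definitions.
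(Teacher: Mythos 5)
Your proof is correct, but there is nothing in the paper to compare it against line by line: the paper states this proposition bare, with no proof, the intended argument presumably being the same one-line reduction used for its $W_{p}^{mid}$ analogue in Section 2 (cite the tensor-norm duality and the definition of the norms). Measured against that implicit route, your argument is genuinely different and in one respect cleaner. The tensor-product route would invoke Theorem~\ref{gammap and Pipmid}, which only represents $\Pi_{p^{*}}^{mid}(X,Y^{*})$ for \emph{dual} codomains, so for a general codomain $Y$ one would have to pass through $J_{Y}T:X\rightarrow Y^{**}$ and Corollary~\ref{T and T**}; your finite-sequence transcription avoids this entirely. Moreover, in your norming step for $(a)\Rightarrow(d)$ the vectors $Tx_{j}$ lie in $Y$ and are paired against functionals, so Hahn--Banach supplies \emph{exact} norming functionals $f_{j}\in B_{Y^{*}}$, whereas the converse half of Theorem~\ref{gammap and Pipmid} must take norming vectors in the predual $Y$ and therefore needs the $(1+\epsilon)$-approximation; your two-step choice ($\ell_{p}$-norming scalars $\lambda_{j}$ times unit functionals $f_{j}$) is the standard and correct way to do this, and your truncation/monotonicity argument for $(c)\Leftrightarrow(d)\Leftrightarrow(e)$ together with the constant bookkeeping is sound.

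Two remarks on your reading of the statement, both of which you resolved correctly but should be made explicit. First, you silently read \eqref{eq:38} with $\|(x_{j})_{j=1}^{n}\|_{p^{*}}^{mid}$ on the right-hand side; as printed, with $\|(x_{j})_{j=1}^{n}\|_{p^{*}}$, condition (b) holds for \emph{every} bounded operator with $C=\|T\|$, and the stated equivalence would then be false, so this is a typo in the paper that your reading repairs (similarly, the index $n$ on the right-hand side of \eqref{eq:40} should be $\infty$, and $\pi_{p^{*}}(T)$ in the ``Furthermore'' clause should read $\pi_{p^{*}}^{mid}(T)$, as you write it). Second, a harmless index slip in your endpoint case: for $p=1$ the conditions involve $p^{*}=\infty$, so the ideal that trivializes is $\Pi_{\infty}^{mid}(X,Y)=\mathcal{L}(X,Y)$, not $\Pi_{1}^{mid}(X,Y)$ --- the same slip, incidentally, that appears in the paper's own proof of Theorem~\ref{gammap and Pipmid}.
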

The next theorem characterizes the adjoints of absolutely mid $p$-summing operators. 
\begin{thm}\label{Adjoints}

	Let $X,Y$ be Banach spaces and $T:X\rightarrow Y$ be a continuous linear operator. Then for $1\leq p<\infty$,
	\begin{enumerate}[(i)]
	\item   $\hat{T^{*}}:\ell_{p^{*}}(Y^{*})\rightarrow (\ell_{p}^{mid})^{dual}(X^{*})$
	is well defined and continuous if and only if $T\in \Pi_{p}^{mid}(X,Y)$. In this case, $\pi_{p}^{mid}(T)=\|\hat{T^{*}}\|.$
	\item  $\hat{T}:\ell_{p^{*}}\left(X\right)\rightarrow(\ell_{p}^{mid})^{dual}\left(Y\right)$ is well defined and continuous if and only if $T^{*}\in \Pi_{p}^{mid}(Y^{*},X^{*})$. Moreover, $\pi_{p}^{mid}(T^{*})=\|\hat{T}\|$.
\end{enumerate}
	
\end{thm}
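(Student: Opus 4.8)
The plan is to reduce each equivalence to a single bilinear estimate and to match it against the defining inequality of $\Pi_p^{mid}$. Recall that $T\in\Pi_p^{mid}(X,Y)$ means exactly that $\|(Tx_j)_j\|_p\le \pi_p^{mid}(T)\,\|(x_j)_j\|_p^{mid}$ for all finite families in $X$; via the $\ell_p(Y)$--$\ell_{p^*}(Y^*)$ duality (absorbing unimodular phases into the $f_j$) this is equivalent to
\begin{equation*}
\Big|\sum_{j}\langle Tx_j,f_j\rangle\Big|\le C\,\|(x_j)_j\|_p^{mid}\,\|(f_j)_j\|_{p^*}
\end{equation*}
for all finite $(x_j)$ in $X$ and $(f_j)$ in $Y^{*}$, with $\pi_p^{mid}(T)$ the least admissible $C$. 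On the other side, continuity of a map into $(\ell_p^{mid})^{dual}(Z)$ is governed by $\|(z_j)_j\|_{p,mid}^{dual}=\sup\{\sum_j|\xi_j(z_j)|:\ \|(\xi_j)_j\|_p^{mid}\le 1\}$, so after transposing $T$ I will compute these suprema and invoke the transposition identities $\eta_j(Tx_j)=(T^{*}\eta_j)(x_j)$ and $\xi_j(T^{*}f_j)=(T^{**}\xi_j)(f_j)$.

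I would settle part (ii) first, as it needs no passage to the bidual. By the definition of $\|\cdot\|_{p,mid}^{dual}$, continuity of $\hat{T}:\ell_{p^*}(X)\to(\ell_p^{mid})^{dual}(Y)$ is equivalent to
\begin{equation*}
\sum_j|(T^{*}\eta_j)(x_j)|\le C\,\|(\eta_j)_j\|_p^{mid}\,\|(x_j)_j\|_{p^*}
\end{equation*}
for all $(\eta_j)\in\ell_p^{mid}(Y^{*})$ and $(x_j)\in\ell_{p^*}(X)$. If $T^{*}\in\Pi_p^{mid}(Y^{*},X^{*})$ this follows from H\"older, since $(T^{*}\eta_j)\in\ell_p(X^{*})$ with $\|(T^{*}\eta_j)\|_p\le\pi_p^{mid}(T^{*})\|(\eta_j)\|_p^{mid}$; conversely, fixing $(\eta_j)$ and taking the supremum over $(x_j)\in B_{\ell_{p^*}(X)}$ recovers $\|(T^{*}\eta_j)\|_p$ on the left, giving $T^{*}\in\Pi_p^{mid}(Y^{*},X^{*})$ with $\pi_p^{mid}(T^{*})\le\|\hat{T}\|$. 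The two inequalities together yield $\pi_p^{mid}(T^{*})=\|\hat{T}\|$.

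For part (i) I would argue analogously, but now $\|(T^{*}f_j)_j\|_{p,mid}^{dual}$ is a supremum over $(\xi_j)\in B_{\ell_p^{mid}(X^{**})}$, and $\xi_j(T^{*}f_j)=(T^{**}\xi_j)(f_j)$ forces the bidual operator into the estimate. For the forward implication I would invoke Corollary \ref{T and T**}: if $T\in\Pi_p^{mid}(X,Y)$ then $T^{**}\in\Pi_p^{mid}(X^{**},Y^{**})$ with equal norm, so $(T^{**}\xi_j)\in\ell_p(Y^{**})$ and H\"older against $(f_j)\in\ell_{p^*}(Y^{*})$ gives $\|\hat{T^{*}}\|\le\pi_p^{mid}(T)$. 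For the converse I would restrict the supremum to the canonical images $\xi_j=J_X x_j$, where $J_X\colon X\to X^{**}$ is the evaluation map; since $J_X^{*}$ is the norm-one restriction map one has $\|(J_X x_j)_j\|_p^{mid}\le\|(x_j)_j\|_p^{mid}$, and $(J_X x_j)(T^{*}f_j)=\langle Tx_j,f_j\rangle$, so the dual-norm inequality collapses to the bilinear estimate of the first paragraph and hence to $T\in\Pi_p^{mid}(X,Y)$ with $\pi_p^{mid}(T)\le\|\hat{T^{*}}\|$.

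The routine parts are the H\"older estimates and the scalar-duality identifications of the $\ell_p$- and dual-norms; the genuine obstacle is confined to part (i), namely handling the supremum over $\ell_p^{mid}(X^{**})$ rather than over canonical images. The forward direction needs precisely that $T^{**}$ stays absolutely mid $p$-summing, which is why Corollary \ref{T and T**} (a consequence of the maximality of $\Pi_p^{mid}$) is essential, whereas the converse needs only the easy inequality $\|(J_X x_j)_j\|_p^{mid}\le\|(x_j)_j\|_p^{mid}$ witnessing the isometric embedding $\ell_p^{mid}(X)\hookrightarrow\ell_p^{mid}(X^{**})$. Once both norm inequalities are secured in each part, the asserted equalities $\pi_p^{mid}(T)=\|\hat{T^{*}}\|$ and $\pi_p^{mid}(T^{*})=\|\hat{T}\|$ follow by combining them.
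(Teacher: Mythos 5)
Your proposal is correct, and part (i) is essentially the paper's own argument: the forward direction passes to $T^{**}$ via Corollary \ref{T and T**} and applies H\"older against $(f_j)\in\ell_{p^*}(Y^*)$, and the converse restricts the supremum defining $\|\cdot\|_{p,mid}^{dual}$ to the canonical images $J_Xx_j$, using $\|(J_Xx_j)_j\|_p^{mid}\le\|(x_j)_j\|_p^{mid}$ --- precisely the paper's estimates \eqref{eq:3.19} and \eqref{eq:3.20}. Part (ii) is where you genuinely diverge: the paper does not argue directly but identifies $T^{*}$ with the functional $\Phi_{T^{*}}$ on $Y^{*}\hat{\otimes}_{\gamma_{p^{*}}}X$ and invokes its tensor-norm representation theorem (Theorem \ref{gammap and Pipmid}) to conclude $T^{*}\in\Pi_{p}^{mid}(Y^{*},X^{*})$ together with the norm identity, whereas you prove the equivalence by hand, matching the bilinear estimate $\sum_j|(T^{*}\eta_j)(x_j)|\le C\,\|(\eta_j)_j\|_p^{mid}\,\|(x_j)_j\|_{p^{*}}$ against the defining inequality of $\Pi_p^{mid}(Y^{*},X^{*})$ via H\"older in one direction and, in the other, the scalar duality $\sup\left\{\sum_j|\phi_j(x_j)| : \|(x_j)_j\|_{p^{*}}\le 1\right\}=\|(\phi_j)_j\|_p$ obtained from norming vectors and $\ell_p$--$\ell_{p^{*}}$ duality. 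The analytic core is the same --- your duality step is exactly what the paper's Theorem \ref{gammap and Pipmid} encapsulates through its norm-attaining choice $\left<y_j,Tx_j\right>=\|Tx_j\|^{p^{*}}$ --- but your route is self-contained and shows that part (ii), unlike part (i), needs neither the bidual nor any tensor-product machinery, while the paper's citation is shorter on the page and keeps the statement tied to the tensor norm $\gamma_{p^{*}}$, which is both the theme of that section and the source of the equality $\|\Phi_{T^{*}}\|=\pi_p^{mid}(T^{*})$ that it reuses.
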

\begin{proof}
	\begin{enumerate}
		(i) 
	Let $T\in \Pi_{p}^{mid}(X,Y)$.  Then $T^{**}\in \Pi_{p}^{mid}(X^{**},Y^{**})$ by Corollary \ref{T and T**} and hence by Holder's inequality, we have  	\begin{align}\label{eq:3.19}
		\left\|\left(T^{*}(f_{j})_{j=1}^{n}\right)\right\|_{p,{mid}}^{dual}& =\sup_{(\psi_{j})_{j}\in B_{\ell_{p}^{mid}(X^{**})}}\sum_{j=1}^{n}\left|\left<\psi_{j},T^{*}(f_{j})\right>\right|\nonumber\\
		&= \sup_{(\psi_{j})_{j}\in B_{\ell_{p}^{mid}(X^{**})}}\sum_{j=1}^{n}\left|\left<T^{**}\psi_{j},(f_{j})\right>\right|\nonumber\\
		&\leq \sup_{(\psi_{j})_{j}\in B_{\ell_{p}^{mid}(X^{**})}}\left\|\left(T^{**}(\psi_{j})\right)_{j=1}^{n}\right\|_{p}\left\|(f_{j})_{j=1}^{n}\right\|_{p^{*}}\nonumber\\
		& \leq \sup_{(\psi_{j})_{j}\in B_{\ell_{p}^{mid}(X^{**})}} \pi_{p}^{mid}(T^{**})\left\|(\psi_{j})_{j=1}^{n}\right\|_{p}^{mid} \|(f_{j})_{j=1}^{n}\|_{p^{*}}\nonumber\\
		&\leq \pi_{p}^{mid}(T)\|(f_{j})_{j=1}^{n}\|_{p^{*}}
	\end{align}
	for every finite sequence $(f_{1},f_{2},...,f_{n})\in Y^{*}$. Thus $\hat{T^{*}}:\ell_{p^*}(Y^{*})\rightarrow (\ell_{p}^{mid})^{dual}(X^{*})$ is well defined and continuous.
	
	Conversly, let $\hat{T^{*}}:\ell_{p^*}(Y^{*})\rightarrow (\ell_{p}^{mid})^{dual}(X^{*})$  be well defined and continuous. Then by using $\ell_{p}^{dual}(Y^{*})\cong \ell_{p^{*}}(Y^{*}),$ we get
	\begin{align}\label{eq:3.20}
		\left\|(Tx_{j})_{j=1}^{n}\right\|_{p} & = \sup_{(f_{j})_{j}\in B_{\ell_{p^{*}}(Y^{*})}}\sum_{j=1}^{n}\left|\left<f_{j},Tx_{j}\right>\right|\nonumber\\
		&=\sup_{(f_{j})_{j} \in B_{\ell_{p^{*}}(Y^{*})}}\sum_{j=1}^{n}\left|\left<T^{*}f_{j},J_{X}x_{j}\right>\right|\nonumber\\
		&\leq \sup_{(f_{j})_{j} \in B_{\ell_{p^{*}}(Y^{*})}} C \left\|(J_{X}(x_{j}))_{j=1} ^{n}\right\|_{p}
		^{mid} \|(f_{j})_{j=1}^{n}\|_{p^{*}}\nonumber\\
		& \leq \|\hat{T^{*}}\| \left\|(x_{j})_{j=1} ^{n}\right\|_{p}
		^{mid}
	\end{align} 
	for each finite sequence $(x_{1},x_{2},...,x_{n})\subseteq X^{*}$, where $J_{X}:X\rightarrow X^{**}$ is the canonical embedding. Therefore, $T\in \Pi_{p}^{mid}(X,Y)$ and $$\pi_{p}^{mid}(T)=\|\hat{T^{*}}\|$$ by using \eqref{eq:3.19} and \eqref{eq:3.20}.
\item
Assume $T:X\rightarrow Y$ be such that $\hat{T}:\ell_{p^{*}}\left(X\right)\rightarrow(\ell_{p}^{mid})^{dual}\left(Y\right)$ is well defined and continuous. Now, define $u=\sum\limits_{j=1}^{n}f_{j}\otimes x_{j}$ in $ Y^{*}\hat{\otimes}_{\gamma_{p^{*}}}X$. Then
	\begin{align}\label{eq:42}
		\left|\Phi_{T^{*}}\left(u\right)\right|&= \left|\sum_{j=1}^{n}\left<T^{*}f_{j},x_{j}\right>\right|\nonumber\\ 
		&=\left|\sum_{j=1}^{n}\left<f_{j},Tx_{j}\right>\right|\nonumber\\ 
		&\leq \|\hat{T}\|\|\left(f_{j}\right)_{j}\|_{p}^{mid} \|\left(x_{j}\right)_{j}\|_{p^{*}}
	\end{align}
	by using the hypothesis. Thus $\Phi_{T^{*}}\in (Y^{*}\hat{\otimes}_{\gamma_{p^{*}}}X)^{*}$ and hence it follows from Theorem \ref{gammap and Pipmid}  that $T^{*}\in \Pi_{p}^{mid}(Y^{*},X^{*})$. 
	
	On the other hand, let $T^{*}$ be an absolutely mid  $p$-summing linear operator or equivalently,  $\Phi_{T^{*}}\in (Y^{*}\hat{\otimes}_{\gamma_{p^{*}}}X)^{*}$. Then for each $(x_{1},x_{2},...,x_{n})\subseteq X$ and  $(f_{1},f_{2},...,f_{n})\subseteq Y^{*},$ we have
	\begin{align}\label{eq:3.23}
		\sum_{j=1}^{n}\left|f_{j}(Tx_{j})\right|&=\left|\sum_{j=1}^{n}f_{j}(Tx_{j})\right|=\left|\Phi_{T^{*}}\left(\sum_{j=1}^{n}f_{j}\otimes x_{j}\right)\right|\nonumber\\
		&\leq \|\Phi_{T^{*}}\| \|\left(f_{j}\right)_{j}\|_{p}^{mid} \|\left(x_{j}\right)_{j}\|_{p^{*}}
	\end{align}
\end{enumerate}
	which proves that $\hat{T}:\ell_{p^{*}}\left(X\right)\rightarrow(\ell_{p}^{mid})^{dual}\left(Y\right)$ is well defined and continuous.  Also, together  \eqref{eq:42}  and \eqref{eq:3.23} imply $$\|\hat{T}\|=\pi_{p}^{mid}(T^{*}).$$
			\end{proof}
Finally we can identify the dual of $\Pi_{p}^{mid}$ as,
	\begin{cor}
		 Let $X$, $Y$ be Banach spaces and $1\leq p<\infty$. Then the component $(\Pi_{p}^{mid})^{dual}(X,Y)$  of the dual ideal  $(\Pi_{p}^{mid})^{dual}$ is given by,
		 \begin{equation*}
		  (\Pi_{p}^{mid})^{^{dual}}(X,Y)=\{T\in \mathcal{L}(X,Y): \hat{T}: \ell_{p^{*}}(X)\rightarrow (\ell_{p}^{mid})^{^{dual}}\left(Y\right) \text{is well defined and continuous}\}.
		  \end{equation*}
Furthermore, $(\pi_{p}^{mid})^{dual}(T)=\|\hat{T}\|$ for all $T\in (\Pi_{p}^{mid})^{dual}(X,Y)$.		 
	\end{cor}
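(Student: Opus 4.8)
The plan is to recognize that this corollary is a direct consequence of the definition of the dual operator ideal together with part (ii) of Theorem \ref{Adjoints}, so the proof will essentially be a chain of biconditionals with no substantial new work. I would begin by unwinding the definition of the dual ideal given in \eqref{eq:13}: by that definition, an operator $T\in\mathcal{L}(X,Y)$ lies in $(\Pi_{p}^{mid})^{dual}(X,Y)$ precisely when its adjoint $T^{*}$ belongs to $\Pi_{p}^{mid}(Y^{*},X^{*})$.

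Next I would invoke Theorem \ref{Adjoints}(ii), which characterizes exactly this membership: $T^{*}\in\Pi_{p}^{mid}(Y^{*},X^{*})$ if and only if the induced map $\hat{T}:\ell_{p^{*}}(X)\rightarrow(\ell_{p}^{mid})^{dual}(Y)$ is well defined and continuous. Composing the two equivalences immediately yields the stated set equality
\begin{equation*}
(\Pi_{p}^{mid})^{dual}(X,Y)=\{T\in\mathcal{L}(X,Y):\hat{T}:\ell_{p^{*}}(X)\rightarrow(\ell_{p}^{mid})^{dual}(Y)\text{ is well defined and continuous}\}.
\end{equation*}

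For the norm identity I would combine the definition of the dual ideal norm, namely $(\pi_{p}^{mid})^{dual}(T)=\pi_{p}^{mid}(T^{*})$, with the quantitative conclusion $\pi_{p}^{mid}(T^{*})=\|\hat{T}\|$ supplied by Theorem \ref{Adjoints}(ii). Chaining these gives $(\pi_{p}^{mid})^{dual}(T)=\|\hat{T}\|$ for every $T$ in the dual ideal, completing the argument. I do not anticipate any genuine obstacle here, since every ingredient has already been established; the only point requiring a little care is to make sure the roles of $X$, $Y$ and their duals are matched correctly between \eqref{eq:13} and the statement of Theorem \ref{Adjoints}(ii), so that the adjoint $T^{*}$ acting on $Y^{*}$ aligns with the space on which $\Pi_{p}^{mid}$ is evaluated.
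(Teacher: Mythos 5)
Your proposal is correct and matches the paper's intended argument exactly: the corollary is stated as an immediate consequence of the definition of the dual ideal in \eqref{eq:13} together with Theorem \ref{Adjoints}(ii), which is precisely the chain of equivalences and the norm identity $(\pi_{p}^{mid})^{dual}(T)=\pi_{p}^{mid}(T^{*})=\|\hat{T}\|$ that you give. Nothing further is needed.
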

\textbf{ACKNOWLEDGEMENT}. Financial support from the Department of Science Technology, India is acknowledged.

\end{document}